\documentclass{amsart}
\usepackage{amsmath,amssymb,amsfonts}
\usepackage{eucal}
\usepackage[normalem]{ulem}
\usepackage{verbatim}

\newtheorem{thm}{Theorem}[section] 
\newtheorem{cor}[thm]{Corollary}

\newtheorem{lem}[thm]{Lemma}

\theoremstyle{definition}

\newtheorem{ex}[thm]{Example}
\newtheorem{rmk}[thm]{Remark}

\theoremstyle{remark}
\newtheorem*{acknowledgments}{Acknowledgments}

\input xy
\xyoption{all}
\newdir{ >}{{}*!/-10pt/@{>}} 

\def\N{\mathbb{N}}

\def\A{\mathcal{A}}
\def\C{\mathfrak{C}}
\def\D{\Delta}
\def\L{\Lambda}

\def\a{\alpha}
\def\b{\beta}

\def\e{\epsilon}

\def\s{\sigma}
\def\t{\tau}

\def\sk{\text{sk}}
\def\cosk{\text{cosk}}
\def\coeq{\text{coeq}}

\def\lim{\text{lim}}
\def\colim{\text{colim}}

\def\ra{\rightarrow}

\def\Del{{\bf \Delta}}
\def\Cat{\text{\bf Cat}}

\def\Top{\text{\bf Top}}
\def\sSet{\text{\bf sSet}}
\def\sCat{\text{\bf sCat}}

\begin{document}

\title[Simplicial categories associated to quasi-categories]{On the structure of simplicial categories associated to quasi-categories}

\author[Emily Riehl]{EMILY RIEHL\\ Department of Mathematics, University of Chicago\\ 5734 S.~University Ave., Chicago, IL 60637 \\ e-mail\textup{: \texttt{eriehl@math.uchicago.edu}}}

\date{December 1, 2010}

\begin{abstract} The homotopy coherent nerve from simplicial categories to simplicial sets and its left adjoint $\C$ are important to the study of $(\infty,1)$-categories because they provide a means for comparing two models of their respective homotopy theories, giving a Quillen equivalence between the model structures for quasi-categories and simplicial categories. The functor $\C$ also gives a cofibrant replacement for ordinary categories, regarded as trivial simplicial categories. However, the hom-spaces of the simplicial category $\C X$ arising from a quasi-category $X$ are not well understood. We show that when $X$ is a quasi-category, all $\L^2_1$ horns in the hom-spaces of its simplicial category can be filled. We prove, unexpectedly, that for any simplicial set $X$, the hom-spaces of $\C X$ are 3-coskeletal. We characterize the quasi-categories whose simplicial categories are locally quasi, finding explicit examples of 3-dimensional horns that cannot be filled in all other cases. Finally, we show that when $X$ is the nerve of an ordinary category, $\C X$ is isomorphic to the simplicial category obtained from the standard free simplicial resolution, showing that the two known cofibrant ``simplicial thickenings'' of ordinary categories coincide, and furthermore its hom-spaces are 2-coskeletal.
\end{abstract}

\maketitle

\section{Introduction}\label{intsec}

In recent years, many advances have been made in the study of $(\infty,1)$-\emph{categories}, loosely defined to be categories enriched in $\infty$-groupoids or spaces. Models of $(\infty,1)$-categories abound, but in this paper we restrict our attention to two of the simplest:  quasi-categories, which are simplicial sets that satisfy a particular horn-filling property, and simplicially enriched categories (henceforth, \emph{simplicial categories}). The categories $\sSet$ and $\sCat$ each bear a model structure such that the fibrant objects are the models of $(\infty,1)$-categories --- in the latter case, the simplicial categories whose hom-spaces are Kan complexes. Furthermore, there is a Quillen equivalence between them, with the right adjoint the homotopy coherent nerve $\N \colon \sCat \ra \sSet$ of  Jean-Marc Cordier \cite{cordierdiagramme} (cf. the survey article \cite{bergnersurvey} or \cite[chapters 1 and 2]{lurietopos}).

This adjunction provides the primary means of translating between these two models, which accounts for its importance. Interesting examples of quasi-categories are often presented as homotopy coherent nerves of fibrant simplicial categories: the ``quasi-category of spaces'' is one example. More exotically, any locally presentable quasi-category is equivalent to the homotopy coherent nerve of a combinatorial simplicial model category \cite[proposition A.3.7.6]{lurietopos}. Conversely, the left adjoint $\C\colon \sSet \ra \sCat$ ``rigidifies'' a simplicial set $X$, in which one can define a natural notion of simplicial ``hom-space'' between two vertices, into a category whose simplicial enrichment is given strictly. The vertices of the hom-spaces of $\C X$ are ``composable'' paths of edges in $X$, meaning that the target of each edge in the sequence is the source of the next, not necessarily that any ``composite'' edge exists. If $X$ is a quasi-category, then these edges can be composed, and the 1-simplices in the hom-spaces correspond to higher simplices in $X$, which ``witness'' the sense in which a particular edge is a composite of a path given by others. It is tempting to describe these 1-simplices as homotopies between the various paths, but this isn't an exact analogy because the 1-simplices are directed: the source is always a shorter sequence of edges than the target. Instead, we prefer to think of the 1-simplices as ``factorisations,'' which exhibit how one path of edges can be deformed into a longer one. The higher simplices of the hom-spaces of $\C X$ are ``higher homotopies'' that exhibit coherence relations between the ``factorisations'' relating the various paths.

By a general categorical principle \cite[proposition 3.1.5]{hoveymodel}, the homotopy coherent nerve and its left adjoint $\C$ are determined by a cosimplicial object in $\sCat$, that is, a functor $\C \D^-\colon \Del \ra \sCat$, where $\Del$ is the usual category of finite non-empty ordinals $[n] = \{0,\ldots, n\}$. The homotopy coherent nerve of a simplicial category $\mathcal{C}$ is the simplicial set with $n$-simplices the simplicial functors $\C\D^n \ra \mathcal{C}$, and the functor $\C$ is the left Kan extension of this functor along the Yoneda embedding $\Del \ra \sSet$. This can be computed by a a familiar coend formula, which we describe in the next section. It follows that $\C X$ is the simplicial category ``freely generated'' by $X$, in the sense that $n$-simplices in $X$ correspond to simplicial functors $\C \D^n \ra \C X$, which, we shall see below, should be thought of as homotopy coherent diagrams in $\C X$. Thus, to understand the adjunction $\C\dashv \N$, we must first build intuition for the $\C \D^n$.

There are many ways to describe the simplicial categories $\C \D^n$, one of which employs the free simplicial resolution construction of \cite{cordierdiagramme}, \cite{dwyerkansimplicial}, and elsewhere. By repeatedly applying the free category comonad on $\Cat$ induced by the free-forgetful adjunction $F \dashv U$ between categories and reflexive, directed graphs to the poset category $[n]$, one obtains a simplicial object in $\Cat$:
\[\xymatrix{FU[n] \ar[rr]|-{F\eta U} & & FUFU[n] \ar@<2ex>[ll]|-{FU\epsilon} \ar@<-
2ex>[ll]|-{\epsilon FU} \ar@<-2ex>[rr]|-{FUF\eta U} \ar@<2ex>[rr]|-{F\eta UFU} 
& & \ar[ll]|-{FU\epsilon FU} \ar@<-4ex>
[ll]|-{\epsilon FUFU} \ar@<4ex>[ll]|-{FUFU\epsilon} FUFUFU[n] & \cdots}\]
Each of these categories has the same objects as $[n]$. The arrows of $FU[n]$ are sequences of composable non-identity morphisms in $[n]$. The arrows of $FUFU[n]$ are again such sequences but with every morphism appearing in exactly one set of parentheses. The arrows of $FUFUFU[n]$ are sequences of composable non-identity morphisms with every morphism appearing in exactly two sets of parentheses, and so forth. The face maps $(FU)^k\e(FU)^j$ remove the parentheses that are contained in exactly $k$ others; $FU\cdots FU\e$ composes the morphisms inside the innermost parentheses. The degeneracy maps $F(UF)^k\eta(UF)^jU$ double up the parentheses that are contained in exactly $k$ others; $F\cdots UF\eta U$ inserts parentheses around each individual morphism.

A simplicial object in $\Cat$ determines a simplicial category exactly when each of the constituent functors acts as the identity on objects, as is the case here. Hence, this construction specifies a simplicial category,  which is $\C \D^n$. The vertices of the hom-spaces are the sequences of composable non-identity morphisms, i.e., the arrows of $FU[n]$; the 1-simplices of the hom-spaces are the arrows of $FUFU[n]$; and so forth.

More geometrically, $\C\D^n$ is the simplicial category with objects $0,\ldots, n$ and hom-spaces $\C\D^n(i,j)$ defined to be (ordinary) nerves of certain posets $P_{i,j}$. If $j<i$, then $P_{i,j}$ and hence $\C\D^n(i,j)$ is empty. Otherwise $P_{i,j}$ is the poset of subsets of the interval $\{k \mid i \leq k \leq j \} \subset [n]$ that contain both endpoints. For $j=i$ and $j=i+1$, this poset is the terminal category. For $j > i+1$, a quick calculation shows that $P_{i,j}$ is isomorphic to the product of the category $[1]$ with itself $j-i-1$ times. Hence \[\C\D^n(i,j) = \begin{cases} (\D^1)^{j-i-1} & \text{when}\ j > i, \\ \D^0 & \text{when}\ j=i,\\ \emptyset & \text{when}\  j < i. \end{cases}\] For proof that these two descriptions coincide, see \cite[\S 2]{duggerspivakrigid}. 

Here is some intuition for these definitions. The hom-space $\C\D^n(i,j)$ parametrises paths from $i$ to $j$ in the poset $[n]$. The vertices count the number of distinct paths: if $j = i+2$, then there are two options --- one which passes through the object $i+1$ and one which avoids it --- and, accordingly, the simplicial set $\C\D^n(i,i+2) = \D^1$ has two vertices. The higher dimensional data is designed such that \emph{homotopy coherent} diagrams $[n] \ra \mathcal{C}$, studied extensively by Cordier and Timothy Porter (but see also \cite[section 1.2.6]{lurietopos}), correspond to simplicial functors $\C\D^n \ra \mathcal{C}$. We illustrate the case where $n=3$ and $\mathcal{C} = \Top_{\sSet}$, the simplicially enriched category of compactly generated spaces, with simplicial enrichment given by applying the total singular complex functor to each hom-space (cf. \cite[section 1]{cordierporterhomotopy}). A homotopy \emph{commutative} diagram in $\Top_{\sSet}$ picks out spaces $X$, $Y$, $Z$, and $W$ and functions \[\xymatrix{ X \ar[r]^f \ar@/_3ex/[rr]_j \ar@/_7ex/[rrr]_l & Y \ar[r]^g \ar@/_3ex/[rr]_k & Z \ar[r]^h & W}\] such that there exist homotopies $j \simeq gf$, $k \simeq hg$, $l \simeq kf$, $l \simeq hj$, and $l \simeq hgf$. Such a diagram is homotopy \emph{coherent} if one can chose the above homotopies in such a way that the composite homotopies $l \simeq hgf$ are homotopic in the sense illustrated below \[\xymatrix@=15pt{ l \ar[rr] \ar[dd] \ar[ddrr] \ar@{}[drr]|(.67){\simeq} \ar@{}[ddr]|(.67){\simeq} & & kf \ar[dd] \\ & & \\ hj \ar[rr] & & hgf}\] The data specifying these each of these homotopies is precisely the image of the simplicial map $\D^1 \times \D^1 = \C\D^3(0,3) \ra \Top_{\sSet}(X,W)$.

A more modern treatment of these same ideas is given in the straightening construction of \cite[chapters 2 and 3]{lurietopos}, which associates contravariant simplicial (or marked simplicial) functors with domain $\C X$ to right fibrations (Cartesian fibrations) over $X$, which can be thought of as contravariant Kan-complex-valued (quasi-category-valued) pseudofunctors. The functor $\C$ figures prominently in this correspondence. The work contained in this paper was motivated by our attendant desire to be able to compute particular examples of this construction.

In the next section, we unravel the definition of the simplicial category $\C X$ associated to a simplicial set $X$ and describe the lower dimensional simplices of the hom-spaces $\C X(x,y)$. The intuition provided by this calculation is satisfyingly confirmed by recent work of Dugger and Spivak \cite{duggerspivakrigid}, which identifies the $n$-simplices of $\C X(x,y)$ with necklaces in $X$, accompanied by certain vertex data. In Section \ref{21sec}, we use this characterization to prove that all $\L^2_1$ horns in $\C X(x,y)$ can be filled, when $X$ is a quasi-category. In Section \ref{cosksec}, we demonstrate that the necklace representation is even more useful in higher dimensions, proving the surprising fact that for any simplicial set $X$, $\C X(x,y)$ is 3-coskeletal, which says that sufficiently high dimensional simplicial spheres in these hom-spaces can be filled uniquely. 

In light of these results, one might hope that the simplicial category associated to a quasi-category is locally quasi; however, this is seldom the case. In Section \ref{charsec}, we show that if $\C X$ is locally quasi, then $X$ is the (ordinary) nerve of a category. 
We consider the case when $X$ is the nerve of a category in Section \ref{catsec}; $\C X$ is then its cofibrant replacement. We prove that the hom-spaces of $\C X$ are 2-coskeletal, but that for most categories, there are $\L^3_1$ and $\L^3_2$ horns in certain hom-spaces that cannot be filled. Finally, we show that the simplicial category obtained by applying the free simplicial resolution construction described above to any small category $\A$ is isomorphic to the simplicial category $\C N\A$, where $N \colon \Cat \ra \sSet$ is the ordinary nerve functor. In other words, these constructions coincide for all categories, not just for the poset categories $[n]$.

\section{Understanding the hom-spaces $\C X(x,y)$}

Although we are most interested in understanding the simplicial category associated to a quasi-category, all of the results in this section apply for a generic simplicial set $X$. The notation for simplicial sets throughout this paper is consistent with \cite{goerssjardinesimplicial}. By definition \[\C X = \int^{[n] \in \Del} \coprod_{X_n}\, \C \D^n = \coeq \left( \coprod_{f \colon [m] \ra [n]}\, \coprod_{X_n} \C \D^{m} \xymatrix{ \ar@<.5ex>[r] \ar@<-.5ex>[r] &} \coprod_{[n]} \coprod_{X_n} \C \D^n \right).\]
It suffices to restrict the interior coproducts to the non-degenerate simplices of $X$ and the left outer coproduct to the generating coface maps $d^i \colon [n-1] \ra [n]$. Write $\tilde{X}_n$ for the non-degenerate $n$-simplices of $X$. Then
\[\C X = \colim\left( \raisebox{.6in}{ \xymatrix@R=10pt{ & & \coprod_{\tilde{X}_1} \C\D^0  \ar@<.5ex>[dd]^{d^1} \ar@<-.5ex>[dd]_{d^0} \ar@<.5ex>[r]^{d_0} \ar@<-.5ex>[r]_{d_1} & \coprod_{X_0} \C\D^0 \\ & &  \\ & \coprod_{\tilde{X}_2} \C\D^1 \ar@<1ex>[r]\ar[r] \ar@<-1ex>[r] \ar@<1ex>[dd] \ar[dd] \ar@<-1ex>[dd] & \coprod_{\tilde{X}_1} \C\D^1 & \\ & & \\ & \ar@{}[l]|-{\cdot\,\cdot\,\cdot} \coprod_{\tilde{X}_2} \C\D^2}}\right) \footnote{Some non-degenerate $n$-simplices may have degenerate $n-1$-simplices as faces, so we cannot technically restrict the face maps to maps $d_i \colon \tilde{X}_n \ra \tilde{X}_{n-1}$. Instead, one must attach each degenerate face to the unique lower-dimensional non-degenerate simplex it represents, but this technicality will not affect our conceptual discussion.}\]

The objects of $\C X$ are the vertices of $X$.  The simplicial categories $\C\D^0$ and $\C\D^1$ are the free simplicial categories on the poset categories $[0]$ and $[1]$ respectively, and the free simplicial category functor is a left adjoint and so commutes with colimits. Hence, if $X$ is 1-skeletal so that $\tilde{X}_n = \emptyset$ for all $n>1$, then $\C X$ is the free simplicial category on the graph with vertex set $X_0$ and edge set $\tilde{X}_1$. Concretely, this means that the hom-spaces $\C X(x,y)$ are all discrete simplicial sets containing a vertex for each path of edges from $x$ to $y$ in $X$.

In general, for each 2-simplex of $X$ with boundary as shown \[\xymatrix{ & z \ar[dr]^g & \\ x \ar[ur]^f \ar[rr]_j & & y}\] there exists a 1-simplex from the vertex $j$ to the vertex $gf$ in $\C X(x,y)$. Furthermore, for each vertex in some hom-space representing a sequence of paths containing $j$, there is a 1-simplex connecting it to the vertex representing the same sequence, except with $gf$ in place of $j$. 

However, the 2-skeleton of $X$ does not determine the 1-skeleta of the hom-spaces. For example, for each 3-simplex $\s$ of $X$ as depicted below \[\xymatrix{ & z \ar[dd]^(.7)g \ar[dr]^k & \\ x \ar[ur]^f \ar[dr]_j \ar'[r]^l[rr] & & y \\ & w \ar[ur]_h &}\] there is an edge from $l$ to $hgf$ in $\C X(x,y)$. In general, there is an edge between the vertices represented by paths $p_1\ldots p_r$ and $q_1 \ldots q_r$ of edges from $x$ to $y$ in $X$ if and only if each edge $p$ in the first path that does not appear in the second is replaced by a sequence of $n$-edges that appear as the \emph{spine} of some $n$-simplex of $X$ with $p$ as its \emph{diagonal}. Here, the \emph{spine} of an $n$-simplex is the sequence of edges between the adjacent vertices, using the usual ordering of the vertices, and the \emph{diagonal} is the edge $[0,n]$ from the initial vertex to the final one.

In this way, each edge of $\C X(x,y)$ corresponds to a \emph{necklace} \[\D^{n_1} \vee \cdots \vee \D^{n_r} \ra X\] in $X$. By $\D^n \vee \D^k$ we always mean that the final vertex of the $n$-simplex is identified with the initial vertex of the $k$-simplex. A necklace is comprised of a sequence of \emph{beads}, the $\D^{n_i}$ above,  that are strung together along the \emph{joins}, defined to be the union of the initial and final vertices of each bead. When speaking colloquially, we may associate a bead of the necklace with its image, a simplex of the appropriate dimension in $X$, and the faces of the bead with the corresponding faces of the simplex.

By a theorem of Daniel Dugger and David Spivak, necklaces can be used to characterize the higher dimensional simplices of the hom-spaces $\C X(x,y)$ as well, provided we keep track of additional vertex data. 

\begin{thm}[Dugger, Spivak {\cite[corollary 4.8]{duggerspivakrigid}}]\label{dsthm} Let $X$ be a simplicial set with vertices $x$ and $y$. An $n$-simplex in $\C X(x,y)$ is uniquely represented by a triple $(T, f, \vec{T})$,  where $T$ is a necklace; $f \colon T \ra X$ is a map of simplicial sets that sends each bead of $T$ to a non-degenerate simplex of $X$ and the endpoints of the necklace to the vertices $x$ and $y$, respectively; and $\vec{T}$ is a flag of sets  \[J_T = T^0 \subset T^1 \subset T^2 \subset \cdots \subset T^{n-1} \subset T^n= V_T\] of vertices $V_T$ of $T$, where $J_T$ is the set of joins of $T$.
\end{thm}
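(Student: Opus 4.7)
The plan is to work directly from the colimit formula $\C X = \int^{[N] \in \Del} \coprod_{X_N} \C\D^N$ and to identify a canonical representative in each equivalence class. Spelled out, an $n$-simplex of $\C X(x,y)$ is an equivalence class of pairs $(\s, c)$, where $\s \colon \D^N \ra X$ is a simplex with $\s(i) = x$ and $\s(j) = y$ for some $i \leq j$, and $c$ is an $n$-simplex of $\C\D^N(i,j)$; the equivalence is generated by the cosimplicial structure maps of $\C\D^\bullet$. Since $\C\D^N(i,j)$ is the nerve of the poset $P_{i,j}$ of subsets of $\{i, \ldots, j\}$ containing both endpoints, $c$ amounts to a chain $S_0 \subseteq S_1 \subseteq \cdots \subseteq S_n$ of such subsets.

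From this data I read off a triple as follows. Writing $S_0 = \{i = k_0 < k_1 < \cdots < k_r = j\}$, let $T$ be the necklace whose $\ell$-th bead has vertex set $S_n \cap [k_{\ell-1}, k_\ell]$, so that the joins of $T$ are precisely indexed by $S_0$. Define $f \colon T \ra X$ by sending the $\ell$-th bead to the corresponding face of $\s$, and let $\vec{T}$ be the chain $S_0 \subseteq \cdots \subseteq S_n$ reinterpreted as a flag of vertex subsets of $T$. If a bead maps under $f$ to a degenerate simplex of $X$, one factors that face of $\s$ through a codegeneracy in $\Del$ and passes to a smaller representative; iteration produces a \emph{normalised} representative in which each bead maps non-degenerately. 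Conversely, given any triple $(T,f,\vec{T})$, choosing an ambient simplex $\s$ through which $f$ factors (for instance, adjoining all vertices of the beads in order) and recording the flag yields a pair $(\s, c)$, so every triple arises from some representative.

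The principal obstacle is uniqueness: two normalised triples representing the same simplex of $\C X(x,y)$ must be equal. One has to show that the equivalence relation, when restricted to normalised representatives, is trivial. Codegeneracies are absorbed by the non-degeneracy condition on beads, while cofaces only relabel the ambient simplex $\s$ without altering the intrinsic necklace, the map to $X$, or the flag on the vertex set. I would formalise this by arguing that any representative $(\s,c)$ admits a canonical minimal factorisation through the subcomplex of $X$ assembled from the non-degenerate images of the beads determined by $c$, and that this minimal factorisation depends only on the triple extracted above. The technical core is this combinatorial bookkeeping — ruling out coincidences in which two visibly different necklaces and flags, once mapped into $X$, identify in the coend — and is the heart of Dugger and Spivak's argument.
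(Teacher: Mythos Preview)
The paper does not supply its own proof of this statement; it is quoted from Dugger and Spivak with a citation, and the surrounding text only records the auxiliary Lemma~\ref{tndlem} on totally non-degenerate quotients and the face description of Theorem~\ref{dsrmk}. So there is no in-paper argument to compare against beyond that reference.

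That said, your sketch has a genuine gap at its first step. You assert that an $n$-simplex of $\C X(x,y)$ is an equivalence class of pairs $(\s,c)$ with $\s\colon\D^N\to X$ a single simplex and $c\in\C\D^N(i,j)_n$. This would follow if the coend $\int^{[N]}\coprod_{X_N}\C\D^N$ were computed hom-space by hom-space, but colimits in $\sCat$ are not computed that way: the hom-spaces of a colimit of simplicial categories contain \emph{composites} of morphisms drawn from different summands, and such composites need not lie in the image of any single $\C\D^N\to\C X$. For a concrete failure take $X=\L^2_1$. The hom-space $\C X(0,2)$ contains the vertex given by composing the two edges, yet there is no map $\D^N\to\L^2_1$ sending vertices $i\leq j$ to $0$ and $2$ respectively, since the edge $[i,j]$ would have to land on a nonexistent edge from $0$ to $2$. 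Your ``converse'' paragraph repeats the error when it proposes ``choosing an ambient simplex $\s$ through which $f$ factors'': a necklace in a general simplicial set need not extend to a single simplex. This is precisely why necklaces, rather than simplices, are the correct indexing shapes, and why showing that $\C X(x,y)$ is a colimit over the necklace category is a substantive theorem in Dugger--Spivak's paper rather than a restatement of the coend. Once that step is in hand, your normalisation and uniqueness outline is reasonable and aligns with Lemma~\ref{tndlem}.
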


Necklaces $f\colon T \ra X$ with the property described above are called \emph{totally non-degenerate}. 
Note that the map $f$ need not be injective. If $x = y$ is a vertex with a non-degenerate edge $e\colon x \ra y$, the map $e\colon \D^1 \ra X$ defines a totally non-degenerate necklace in $X$.

Dugger and Spivak prefer to characterize the simplices of $\C X(x,y)$ as equivalence classes of triples $(T,f,\vec{T})$, which are not necessarily totally non-degenerate \cite[corollary 4.4, lemma 4.5]{duggerspivakrigid}. However, it is always possible to replace an arbitrary triple $(T,f,\vec{T})$ by its unique totally non-degenerate quotient.

\begin{lem}[Dugger, Spivak {\cite[proposition 4.7]{duggerspivakrigid}}]\label{tndlem} Let $X$ be a simplicial set and suppose $T$ is a necklace; $f \colon T \ra X$ is a map of simplicial sets; and $\vec{T}$ is a flag of sets  \[J_T = T^0 \subset T^1 \subset T^2 \subset \cdots \subset T^{n-1} \subset T^n= V_T\] of vertices $V_T$ of $T$, where $J_T$ is the set of joins of $T$. Then there is a unique quotient $(\overline{T}, \overline{f}, \overline{\vec{T}})$ of this triple such that $f$ factors through $\overline{f}$ via a surjection $T \twoheadrightarrow \overline{T}$ and $(\overline{T}, \overline{f}, \overline{\vec{T}})$ is totally non-degenerate.
\end{lem}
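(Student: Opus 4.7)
The plan is to build $(\overline{T}, \overline{f}, \overline{\vec{T}})$ bead-by-bead using the Eilenberg--Zilber lemma. Write $T = \D^{n_1} \vee \cdots \vee \D^{n_r}$, and for each $i$ view the restricted map $f_i \colon \D^{n_i} \to X$ as an $n_i$-simplex of $X$. By Eilenberg--Zilber, $f_i$ factors uniquely as an iterated degeneracy $s_i \colon \D^{n_i} \twoheadrightarrow \D^{m_i}$ followed by a non-degenerate simplex $g_i \colon \D^{m_i} \to X$. Because $s_i$ is an order-preserving surjection of ordinals, it sends the initial and final vertex of $\D^{n_i}$ to the initial and final vertex of $\D^{m_i}$, so the individual $s_i$ glue into a well-defined surjection of necklaces.

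I would then set $\overline{T} = \D^{m_1} \vee \cdots \vee \D^{m_r}$, with the convention that a bead with $m_i = 0$ is absorbed (identifying its two flanking joins), take $q \colon T \twoheadrightarrow \overline{T}$ to be the wedge of the $s_i$, and let $\overline{f} \colon \overline{T} \to X$ be the map assembled from the $g_i$. By construction $(\overline{T}, \overline{f})$ is totally non-degenerate and $f = \overline{f} \circ q$. Define the flag $\overline{\vec{T}}$ by pushforward, $\overline{T}^k := q(T^k)$; surjectivity of $q$ gives $\overline{T}^n = V_{\overline{T}}$, and the fact that $q$ sends $J_T$ onto $J_{\overline{T}}$ (endpoints of each original bead remain joins in $\overline{T}$, and every join of $\overline{T}$ is visibly the image of one) gives $\overline{T}^0 = J_{\overline{T}}$, so $\overline{\vec{T}}$ is a genuine flag.

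For uniqueness, suppose $(T', f', \vec{T}')$ is any totally non-degenerate quotient of $(T, f, \vec{T})$ via a surjection $q' \colon T \twoheadrightarrow T'$. A morphism of necklaces sends beads to beads and joins to joins, so $q'$ restricts on each $\D^{n_i}$ to a degeneracy $\D^{n_i} \twoheadrightarrow \D^{m_i'}$ whose post-composition with $f'$ recovers $f_i$. Since $f'$ is non-degenerate on each bead of $T'$, the uniqueness clause of Eilenberg--Zilber identifies this factorization with $(s_i, g_i)$, so $(T', q', f')$ agrees with $(\overline{T}, q, \overline{f})$; the flag $\vec{T}'$ is then forced to equal $\overline{\vec{T}}$ since a morphism of triples must push the flag forward along the underlying necklace map.

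The main obstacle is pinning down what a ``quotient'' of necklaces should mean: the argument above hinges on such a surjection necessarily respecting the bead structure and carrying joins to joins, and this has to be part of whatever notion of necklace morphism one adopts from Dugger and Spivak, verified as a preliminary lemma before invoking it in the uniqueness step.
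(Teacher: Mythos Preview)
Your proposal is correct and follows essentially the same route as the paper: apply Eilenberg--Zilber bead-by-bead to replace each $\D^{n_i}$ by the $\D^{m_i}$ carrying the non-degenerate image, glue the resulting surjections into $T \twoheadrightarrow \overline{T}$, and push the flag forward. The paper's proof is in fact terser than yours---it does not spell out the $m_i = 0$ absorption convention or the uniqueness argument---so the extra care you take in handling collapsed beads and in flagging that uniqueness relies on surjections of necklaces respecting the bead/join structure is a genuine improvement in rigor, not a deviation in strategy.
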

\begin{proof}
By the Eilenberg-Zilber lemma \cite[proposition II.3.1, pp. 26-27]{gabrielzismancalculus}, any simplex $\s \in X_n$ can be written uniquely as $\e \s'$ where $\s' \in X_m$ is non-degenerate, with $m \leq n$, and $\e \colon X_m \ra X_n$ is a simplicial operator corresponding to a surjection $[n] \ra [m]$ in $\Del$. The necklace $\overline{T}$ agrees with $T$ at each bead whose image is non-degenerate. If $\s$ is a degenerate $n$-simplex in the image of $f$, then to form $\overline{T}$ we replace the bead $\D^n$ of $T$ corresponding to $\s$ by the bead $\D^m$, where $m$ is determined by the Eilenberg-Zilber decomposition of $\s$, described above. Define $\overline{f} \colon \overline{T} \ra X$ restricted to this $\D^m$ to equal $\s'$. The morphism $\e$ defines a surjective map of simplicial sets $\D^n \ra \D^m$, which defines the quotient map $T \twoheadrightarrow \overline{T}$ at this bead in such a way that $f$ factors through $\overline{f}$ along this map.

Let $\overline{\vec{T}}$ be the flag of sets of vertices of $\overline{T}$ given by the direct image of $\vec{T}$ under $T \twoheadrightarrow \overline{T}$. The resulting triple $(\overline{T}, \overline{f}, \overline{\vec{T}})$ is totally non-degenerate and unique such that $f$ factors through $\overline{f}$.
\end{proof}

With the aid of Lemma \ref{tndlem}, the face maps $d_i \colon \C X(x,y)_n \ra \C X(x,y)_{n-1}$ can also be described in the language of flags and necklaces. In what follows, $d_i \vec{T}$ denotes the flag of sets $$J_T  = T^0 \subset \cdots \subset \widehat{T^i} \subset \cdots \subset T^n = V_T$$ with $T^i$ removed from the sequence.

\begin{thm}[Dugger, Spivak {\cite[remarks 4.6 and 4.9]{duggerspivakrigid}}]\label{dsrmk}  The faces of an $n$-simplex $(T,f, \vec{T})$ are uniquely represented by the triples \[d_i (T,f, \vec{T}) = \begin{cases} (\overline{T^*},\overline{f\vert_{T^*}}, \overline{d_0\vec{T}}) & i=0 \\ (T,f , d_i\vec{T}) & 0 < i < n \\ (\overline{T'}, \overline{f\vert_{T'}}, \overline{d_n\vec{T}}) & i = n \end{cases}\] where $T' = T\vert_{T^{n-1}}$ is the maximal subnecklace of $T$ with vertices $T^{n-1}$ and $T^*$ is the maximal subnecklace of $T$ with joins $T^1$. If the triples $(T^*, f\vert_{T^*}, d_0\vec{T})$ and $(T', f\vert_{T'}, d_n\vec{T})$ are totally non-degenerate, then these are the zeroth and $n$-th faces, respectively; otherwise, these faces are given by the unique quotients of Lemma \ref{tndlem}.
\end{thm}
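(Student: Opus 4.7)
The plan is to unravel the bijection of Theorem \ref{dsthm} in terms of a product decomposition of hom-spaces of necklaces, where the simplicial structure becomes transparent. For a necklace $T = \D^{n_1} \vee \cdots \vee \D^{n_r}$, the functor $\C$ preserves the pushouts gluing the beads at their joins, and each $\C\D^{n_i}(0, n_i) = (\D^1)^{n_i - 1}$; hence
\[\C T(x_0, x_r) \cong \prod_{i=1}^r (\D^1)^{n_i-1}.\]
In this description, a vertex is a subset of $V_T$ containing $J_T$, and an $n$-simplex is precisely a flag $J_T = T^0 \subseteq T^1 \subseteq \cdots \subseteq T^n = V_T$, so a triple $(T, f, \vec{T})$ represents the image of this $n$-simplex under the simplicial functor $\C f \colon \C T \to \C X$.

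Faces in a product of copies of $\D^1$ are computed levelwise: on a chain $S_0 \subseteq \cdots \subseteq S_k$ the operator $d_i$ simply deletes $S_i$. For $0 < i < n$, removing $T^i$ preserves both endpoints of the flag, which remains a flag of subsets between $J_T$ and $V_T$; hence the formula $d_i(T, f, \vec{T}) = (T, f, d_i \vec{T})$ is immediate from the product description.

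The remaining cases require a change of necklace because the resulting flag no longer has $J_T$ at bottom or $V_T$ at top. For $d_n$, the flag $d_n\vec{T}$ has top $T^{n-1}$, possibly a proper subset of $V_T$; the vertices outside $T^{n-1}$ play no role, and restricting to the subnecklace $T' = T|_{T^{n-1}}$ gives the same simplex of $\C X(x,y)$ via the factorisation $\C T'(x_0, x_r) \hookrightarrow \C T(x_0, x_r) \to \C X(x,y)$. For $d_0$, the flag $d_0\vec{T}$ has bottom $T^1 \supseteq J_T$; refining $T$ by splitting every bead at each vertex of $T^1 \setminus J_T$ produces the maximal subnecklace $T^*$ with joins exactly $T^1$, and the face factors through $\C T^*(x_0, x_r) \hookrightarrow \C T(x_0, x_r) \to \C X(x,y)$. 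The resulting triples need not be totally non-degenerate, so Lemma \ref{tndlem} supplies the unique totally non-degenerate representatives appearing in the statement.

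The main obstacle is bookkeeping at the boundary: verifying that restriction to $T'$ or refinement to $T^*$ indeed realises the prescribed face operator within the larger product, and checking that the restricted map together with the truncated flag continues to represent the same simplex of $\C X(x,y)$. Once these identifications are in hand, the theorem follows by direct unwinding of the definitions.
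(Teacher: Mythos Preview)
The paper does not supply its own proof of this theorem; it is stated with attribution to Dugger and Spivak \cite[remarks 4.6 and 4.9]{duggerspivakrigid} and used as a black box. Your proposal is a correct and essentially complete reconstruction of the argument: the key observation that an $n$-simplex represented by $(T,f,\vec{T})$ is the image under the simplicial map $\C f$ of the flag $\vec{T} \in \C T(\a,\omega) \cong N\{S \mid J_T \subset S \subset V_T\}$, so that faces are computed by deleting a term of the flag and then, for the outer faces, re-expressing the result over the appropriate sub- or refined necklace, is exactly what underlies Dugger and Spivak's description. The bookkeeping you flag at the end---that the inclusions $\C T' \hookrightarrow \C T$ and $\C T^* \hookrightarrow \C T$ identify the truncated flags with the correct faces---is routine once one notes that $J_{T'} = J_T$, $V_{T'} = T^{n-1}$, $J_{T^*} = T^1$, and $V_{T^*} = V_T$, so there is no genuine gap.
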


We call $T^*$ the $T^1$-\emph{splitting} of $T$. Each bead of $T$ is replaced by a necklace with the same spine whose beads are each faces of the original bead. The vertices of each new bead will be a consecutive subset of vertices of the bead of $T$ with initial and final vertices in $T^1$. The sum of the dimensions of these new beads will equal the dimension of the original bead.

\begin{ex} We use Theorem \ref{dsrmk} to compute the faces of the 3-simplex \[(\D^6, \s \colon \D^6 \ra X, \{0,6\} \subset \{0,3,4,6\} \subset \{0,1,3,4,6\} \subset [6]).\] 
The necklace $\D^3 \vee \D^1 \vee \D^2$  
is the $\{0, 3, 4, 6\}$-splitting of the necklace $\D^6$, so the zeroth face is \[(\D^3 \vee \D^1 \vee \D^2, \s\vert_{\sim} \colon \D^3 \vee \D^1 \vee \D^2 \ra X, \{0, 3,4,6\} \subset \{0,1,3,4,6\} \subset [6])\] where the map $\s\vert_{\sim}$ is given on the first bead by restricting $\s$ to the face containing the vertices $0$, $1$, $2$, and $3$; on the second bead by restricting to the face containing the vertices $3$ and $4$; and on the third bead by restricting to the face containing the vertices $4$, $5$, and $6$. The first and second faces are 
\begin{align*}  (\D^6, \s \colon \D^6 \ra X, \{0,6\} \subset \{0,1,3,4,6\} \subset [6])& \\  \text{and} \quad (\D^6, \s \colon \D^6 \ra X, \{0,6\} \subset \{0,3,4,6\} \subset [6])&.\\ \intertext{Note that the necklace parts of these faces are the same. The third face is} (\D^4, d_2d_5\s \colon \D^4 \ra X, \{0,4\} \subset \{0,2,3,4\} \subset [4])&\end{align*} where we have chosen to rename the vertices in the flag. The simplicial map $d_2d_5\s \colon \D^4 \ra X$ is the restriction of $\s$ to the face containing all vertices except for 2 and 5.
\end{ex}

We note an obvious corollary to Theorem \ref{dsrmk}, which will be frequently exploited.

\begin{cor}\label{dscor} The necklaces representing the \emph{inner} faces of an $n$-simplex $(T,f,\vec{T})$ are all equal to $T$.
\end{cor}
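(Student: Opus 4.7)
The plan is essentially to read the claim off the middle branch of the case analysis in Theorem \ref{dsrmk}. For an inner index $0 < i < n$, that theorem gives the formula
\[ d_i(T,f,\vec{T}) = (T, f, d_i\vec{T}), \]
with no passage through the Eilenberg--Zilber/totally non-degenerate replacement of Lemma \ref{tndlem} and no alteration of the underlying necklace. Reading off the first coordinate yields exactly the assertion.

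So the entire argument is to quote Theorem \ref{dsrmk} and observe that only the flag component is modified. There is no obstacle; the corollary is a formal consequence of the statement it follows. The point worth emphasising, which motivates isolating the statement as a corollary rather than leaving it implicit, is the contrast with the outer cases: $d_0$ passes to the $T^1$-splitting $T^*$, which generally breaks the beads of $T$ into subnecklaces along the vertices of $T^1$, and $d_n$ passes to the restriction $T' = T|_{T^{n-1}}$, which trims the necklace down to the beads spanned by $T^{n-1}$. Either of these operations can genuinely change the underlying necklace (and may then require a further totally non-degenerate quotient), whereas removing an interior level from the flag leaves both the simplicial map $f$ and its domain $T$ untouched.

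In short, the proof is a one-line appeal to the $0 < i < n$ clause of Theorem \ref{dsrmk}.
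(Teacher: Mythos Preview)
Your proposal is correct and essentially identical to the paper's own proof: both simply invoke the $0 < i < n$ clause of Theorem \ref{dsrmk}, which gives $d_i(T,f,\vec{T}) = (T,f,d_i\vec{T})$ with no quotient needed, and read off the first coordinate. The paper phrases the key point as ``the triple $(T,f,d_i\vec{T})$ is totally non-degenerate,'' which is exactly your observation that no passage through Lemma \ref{tndlem} occurs.
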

\begin{proof} If $0 < i < n$, the triple $(T,f,d_i\vec{T})$ is totally non-degenerate and represents the $i$-th face of $(T,f,\vec{T})$.
\end{proof}

When describing $n$-simplices of $\C X(x,y)$ as triples $(T,f, \vec{T})$, we frequently define the necklace as a subsimplicial set of $X$, in which case the map $f$ is understood and may be omitted from our notation. In dimensions 0 and 1, the flag $\vec{T}$ is completely determined by the necklace $T$, so we may represent the simplices of $\C X(x,y)$ by necklaces alone.

\section{Filling $\L^2_1$ horns in $\C X(x,y)$}\label{21sec}

Recall, a \emph{quasi-category} is a simplicial set $X$ such that every \emph{inner horn} in $X$ has a \emph{filler}, i.e., any diagram $$\xymatrix{ \L^n_k \ar[r] \ar[d] & X \\ \D^n \ar@{-->}[ur] &}$$ has the indicated extension. Here, $\L^n_k$ is the simplicial subset of the standard (represented) $n$-simplex $\D^n$ generated by all of the $(n-1)$-dimensional faces except for the $k$-th, where $0 < k < n$. 

Horn filling conditions in quasi-categories guarantee that simplices in each dimension can be composed. In this section, we will use the characterization of simplices in the hom-spaces $\C X(x,y)$ as necklaces to prove the following theorem, which says that the ``factorisations'' relating paths of edges in a quasi-category can be composed.

\begin{thm}\label{21thm} Let $X$ be a quasi-category and let $x$ and $y$ be any two vertices. Then every horn $\L^2_1 \ra \C X(x,y)$ has a filler $\D^2 \ra \C X(x,y)$.
\end{thm}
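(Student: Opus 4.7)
The plan is to represent a $\L^2_1$ horn as a pair of 1-simplices $\alpha,\beta$ with $d_0\alpha = d_1\beta$ and to build an explicit filling 2-simplex $(T,f,\vec T)$. By Theorem~\ref{dsthm}, I would write $\alpha$ and $\beta$ as totally non-degenerate necklaces $T_\alpha = \D^{a_1}\vee\cdots\vee\D^{a_p}$ and $T_\beta = \D^{b_1}\vee\cdots\vee\D^{b_q}$ mapping to $X$ via $f_\alpha,f_\beta$. Since, by Theorem~\ref{dsrmk}, $d_0\alpha$ is the spine of $T_\alpha$ and $d_1\beta$ is the sequence of diagonals of the beads of $T_\beta$, the matching condition unpacks to: $q = a_1+\cdots+a_p$, and for each $i$ the $a_i$ spine edges of the $i$th bead of $T_\alpha$ agree in $X$ with the diagonals of the consecutive beads $\D^{b_{k_i+1}},\ldots,\D^{b_{k_i+a_i}}$ of $T_\beta$, where $k_i := a_1+\cdots+a_{i-1}$.

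Setting $N_i := b_{k_i+1}+\cdots+b_{k_i+a_i}$, I would take $T := \D^{N_1}\vee\cdots\vee\D^{N_p}$ with flag $T^0 = J_T$, $T^2 = V_T$, and $T^1$ the subset containing, in each bead $\D^{N_i}$, the ``internal splitting'' vertices $\{0,\,b_{k_i+1},\,b_{k_i+1}+b_{k_i+2},\,\ldots,\,N_i\}$. One checks directly that the $T^1$-splitting of $T$ is canonically $T_\beta$ and that $T\vert_{T^1}$ is canonically $T_\alpha$. Therefore any map $f\colon T \to X$ whose restrictions to these two subcomplexes recover $f_\beta$ and $f_\alpha$ produces, via Theorem~\ref{dsrmk}, a 2-simplex $(T,f,\vec T)$ (or its totally non-degenerate quotient from Lemma~\ref{tndlem}) with $d_0 = \beta$ and $d_2 = \alpha$, as required.

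Constructing such an $f$ reduces, bead by bead, to the following problem: produce, for each $i$, an $N_i$-simplex $f_i\colon \D^{N_i} \to X$ whose $a_i$ sub-bead faces equal the corresponding beads of $T_\beta$ under $f_\beta$ and whose ``diagonal face'' $\D^{\{0,\,b_{k_i+1},\,\ldots,\,N_i\}} \cong \D^{a_i}$ equals the $i$th bead of $T_\alpha$ under $f_\alpha$. The horn-matching condition guarantees these prescribed data already agree on the shared edges (each spine edge of the diagonal face is the diagonal of the corresponding sub-bead). \emph{The main obstacle} is to show that this partial data always extends over $\D^{N_i}$ when $X$ is a quasi-category, i.e.\ that the inclusion of the prescribed subcomplex into $\D^{N_i}$ is inner anodyne. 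I would prove this by induction on $a_i$: the case $a_i = 1$ is trivial, and for $a_i \geq 2$ one builds $f_i$ by a cascade of inner horn fillings in $X$, introducing missing edges and intermediate faces ``from the middle''---first via $\L^2_1$ fillings, then via inner $\L^n_k$ with $0 < k < n$---so that at every stage the outer faces of each horn are supplied either by the prescribed data or by faces constructed earlier in the cascade. The delicate combinatorial bookkeeping required to ensure that no outer horn is ever invoked is the crux of the argument.
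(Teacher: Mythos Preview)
Your overall strategy is sound and close to the paper's, but there is a genuine gap. You tacitly assume that the spine of $T_\alpha$ and the diagonal of $T_\beta$ are already totally non-degenerate, so that $d_0\alpha = d_1\beta$ yields the exact count $q = a_1+\cdots+a_p$ and a clean bijection between spine edges of $T_\alpha$ and beads of $T_\beta$. But Theorem~\ref{dsrmk} says $d_0\alpha$ is the totally non-degenerate \emph{quotient} $\overline{T_\alpha^*}$ of the spine, and $d_1\beta$ is the totally non-degenerate \emph{quotient} $\overline{T_\beta'}$ of the diagonal. A non-degenerate $a_i$-simplex in $X$ can have degenerate spine edges, and a non-degenerate bead of $T_\beta$ can have a degenerate diagonal; when either occurs your vertex counts do not line up and the assignment of $\beta$-beads to $\alpha$-spine-edges, hence your construction of $T$, cannot even begin. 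The paper addresses exactly this by first ``stretching'' $U$ with inserted degenerate edges and ``thickening'' beads of $T$ via degeneracies $s_i\sigma$, so that the two pieces glue to a single simplicial subset $A\subset X$; only then does it fill $A$ to a necklace $S$. You would need an analogous preprocessing step before your bead-by-bead construction makes sense.

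Modulo that repair, your approach and the paper's differ mainly in how the filling is obtained. You propose a direct induction on $a_i$ via cascades of inner horn fillings, deferring the ``delicate combinatorial bookkeeping'' that guarantees no outer horn is ever needed. The paper sidesteps this entirely: it invokes Joyal's join lemma (Lemma~\ref{joinlem}) to show that the two inclusions $\D^k\vee\D^n\hookrightarrow\D^{k+n}$ and $\D^k\cup_{[0,k]=[0,1]}\D^n\hookrightarrow\D^{k+n-1}$ are mid anodyne (Corollaries~\ref{cor1} and~\ref{cor2}), and observes that every extension needed to pass from $A$ to $S$ is an iterate of these two moves. Your inductive route should succeed once the degeneracy issue is handled, but the join-lemma argument is considerably cleaner and avoids the bookkeeping you flag as the crux.
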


Our proof will use a lemma due to Andr\'{e} Joyal describing maps constructed from \emph{joins} of simplicial sets, where the join, denoted $\star$, is the restriction of the Day tensor product \cite{dayonclosed} on augmented simplicial sets arising from the monoidal structure on $\Del_{+}$, the category of all finite ordinals and order preserving maps. The reader who wishes to verify the proofs of Corollaries \ref{cor1} and \ref{cor2} below should see \cite{joyalquasiandkan} for an explicit definition.

Recall a monomorphism of simplicial sets is \emph{mid anodyne} if it is in the saturated class generated by the inner horn inclusions $\L^n_k \ra \D^n$ and \emph{left anodyne} if it is in the saturated class generated by the horn inclusions with $0 \leq k < n$. The left anodyne maps are precisely those maps which have the left lifting property with respect to the \emph{left fibrations}, which are the maps that have the right lifting property with respect to the appropriate horn inclusions.

\begin{lem}[Joyal {\cite[lemma 2.1.2.3]{lurietopos}}]\label{joinlem} If $u \colon X \ra Y$ and $v \colon Z \ra W$ are monomorphisms of simplicial sets such that $v$ is left anodyne, then the map \[u \hat{\star} v \colon X \star W \cup_{X \star Z} Y \star Z \ra Y \star W\] is mid anodyne. 
\end{lem}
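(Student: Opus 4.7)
The plan is to reduce the statement to the case of generating cofibrations and generating left anodyne maps via a standard saturation argument, and then verify the reduced case directly. For any fixed monomorphism $u$, the class of monomorphisms $v$ such that $u \hat{\star} v$ is mid anodyne is itself saturated, because pushout-products are formed by pushouts and preserve transfinite composition and retracts in each variable separately, and because mid anodyne maps form a saturated class; the symmetric statement holds with $u$ and $v$ interchanged. Since the monomorphisms of simplicial sets are the saturation of the boundary inclusions $\partial\D^m \hookrightarrow \D^m$ and the left anodyne maps are the saturation of the horn inclusions $\L^n_k \hookrightarrow \D^n$ with $0 \leq k < n$, it suffices to prove the lemma when $u$ and $v$ are each such generators.

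For the generating case I would use the identification $\D^m \star \D^n \cong \D^{m+n+1}$, under which the vertices $\{0, \ldots, m\}$ come from $\D^m$ and the vertices $\{m+1, \ldots, m+n+1\}$ come from $\D^n$. A simplex $v \subseteq \{0, \ldots, m+n+1\}$ lies in $\partial\D^m \star \D^n$ precisely when it misses some vertex of $\{0,\ldots,m\}$, and in $\D^m \star \L^n_k$ precisely when it misses some vertex of $\{m+1, \ldots, m+n+1\} \setminus \{m+1+k\}$. Since the intersection of these two subsimplicial sets is exactly $\partial\D^m \star \L^n_k$, the pushout agrees with the set-theoretic union inside $\D^{m+n+1}$. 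The only simplices \emph{excluded} from this union are those containing every vertex except possibly $m+1+k$, namely the top simplex and the face $d^{m+1+k}\D^{m+n+1}$; hence the pushout-product is precisely the horn inclusion $\L^{m+n+1}_{m+1+k} \hookrightarrow \D^{m+n+1}$.

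Finally, since $0 \leq k < n$, the omitted index satisfies $1 \leq m+1+k \leq m+n$, so $0 < m+1+k < m+n+1$ and this is an \emph{inner} horn inclusion, hence mid anodyne by definition. The main obstacle is the initial saturation reduction, which requires some care in verifying that pushout-products preserve saturation in each variable separately; once that reduction is made, the identification of the generating pushout-product with an explicit inner horn is routine combinatorial bookkeeping.
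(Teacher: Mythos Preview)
Your proof is correct. Note, however, that the paper does not actually prove this lemma: it is stated with attribution to Joyal and a citation to \cite[lemma 2.1.2.3]{lurietopos}, and no argument is given. Your argument is essentially the standard one found in that reference: reduce to generators via saturation in each variable (using that the join preserves connected colimits in each variable, which is what makes the pushout-product behave well), and then identify the pushout-product of $\partial\D^m \hookrightarrow \D^m$ with $\L^n_k \hookrightarrow \D^n$ as the inner horn inclusion $\L^{m+n+1}_{m+1+k} \hookrightarrow \D^{m+n+1}$. The combinatorial identification you give is correct, and the inequality $0 < m+1+k < m+n+1$ follows from $0 \le k < n$ exactly as you say. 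The only point worth making explicit, which you allude to in your final paragraph, is that the join of (unaugmented) simplicial sets preserves only \emph{connected} colimits in each variable, not all colimits; this is still enough for the saturation argument, since the relevant closure properties (pushouts, transfinite composition, retracts) are all connected.
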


We will apply Lemma \ref{joinlem} in the case where $v$ is the inclusion $i_0\colon \D^0 \ra \D^n$ of the initial vertex of $\D^n$.

\begin{lem} The map $i_0 \colon \D^0 \ra \D^n$ is left anodyne.
\end{lem}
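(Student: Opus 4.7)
The plan is to exhibit $i_0$ as a finite composite of pushouts of the generating left anodyne inclusions $\Lambda^k_0 \hookrightarrow \Delta^k$. Concretely, I would define a filtration
\[\{0\} = X_0 \subset X_1 \subset \cdots \subset X_n = \Delta^n,\]
where $X_k \subset \Delta^n$ is the simplicial subset generated by those non-degenerate simplices of dimension at most $k$ which contain the vertex $0$. By construction $X_n = \Delta^n$, so it suffices to show each inclusion $X_{k-1} \hookrightarrow X_k$ is left anodyne.

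For $k \geq 1$, the non-degenerate $k$-simplices of $\Delta^n$ lying in $X_k$ but not in $X_{k-1}$ are precisely the simplices $\tau = [0, i_1, \ldots, i_k]$ with $1 \leq i_1 < \cdots < i_k \leq n$. I would then check: (i) every face $d_j \tau$ with $j \geq 1$ still contains the vertex $0$, hence lies in $X_{k-1}$; (ii) the face $d_0 \tau = [i_1, \ldots, i_k]$, which does not contain $0$, is not already in $X_{k-1}$, since $X_{k-1}$ contains no $(k-1)$-dimensional simplex missing the vertex $0$ (the only simplices of $X_{k-1}$ that omit $0$ are proper faces of some $[0, j_1, \ldots, j_m]$ with $m \leq k-1$, all of dimension at most $k-2$); and (iii) nevertheless $\partial(d_0\tau) \subset X_{k-1}$, since each $(k-2)$-face of $[i_1, \ldots, i_k]$ is a face of some $[0, i_1, \ldots, \widehat{i_j}, \ldots, i_k] \in X_{k-1}$. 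Together these observations identify the intersection $\tau \cap X_{k-1}$ with the horn $\Lambda^k_0 \subset \Delta^k$.

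Assembling all such $\tau$ simultaneously, $X_k$ is obtained from $X_{k-1}$ by the pushout
\[
\xymatrix{
\coprod_\tau \Lambda^k_0 \ar[r] \ar[d] & X_{k-1} \ar[d] \\
\coprod_\tau \Delta^k \ar[r] & X_k
}
\]
whose left-hand vertical map is a coproduct of generating left anodyne maps. Hence each $X_{k-1} \hookrightarrow X_k$ is left anodyne, and composing yields the result.

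The only real content is the verification in step (ii)--(iii) that the attaching data of each new top-dimensional simplex $\tau$ fills exactly $\Lambda^k_0$ --- no more and no less --- which is where some care with the skeletal bookkeeping is needed; everything else is formal closure of left anodyne maps under pushout and composition.
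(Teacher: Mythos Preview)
Your argument is correct. The filtration is well-defined, the identification $\tau \cap X_{k-1} \cong \Lambda^k_0$ holds for the reasons you give, and the new cells $d_0\tau$ added at stage $k$ are pairwise distinct, so the pushout square is genuinely a pushout.

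Your route differs from the paper's. The paper does not exhibit $i_0$ as a composite of pushouts of generators; instead it verifies the left lifting property against an arbitrary left fibration $p\colon X \to Y$ directly, by inductively lifting the faces of the target $n$-simplex that contain the vertex $0$: first the edges $[0,k]$ via $\Lambda^1_0$-fillers, then the $2$-faces via $\Lambda^2_0$-fillers, and so on up to a final $\Lambda^n_0$-filler. The underlying combinatorics is the same as yours --- at each stage one is filling exactly the $\Lambda^k_0$-horns of the simplices $[0,i_1,\ldots,i_k]$ --- but the packaging is dual. Your version has the advantage of being slightly more elementary: it places $i_0$ in the saturated class by construction, without invoking the equivalence (via the small object argument) between that class and the class of maps with the left lifting property against left fibrations. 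The paper's version has the virtue of being more concrete about what a lift actually looks like, which is perhaps closer in spirit to how the lemma gets used downstream.
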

\begin{proof}
It suffices to show that $i_0$ lifts against any left fibration. Given a left fibration $p \colon X \ra Y$ and a lifting problem
\[\xymatrix{ \D^0 \ar[d]_{i_0} \ar[r]^a & X \ar[d]^p \\ \D^n \ar[r]_b & Y}\] we may lift the edges $[0,k]$ of the $n$-simplex $b\colon \D^n \ra Y$ to $X$ because we can solve lifting problems of the form \[\xymatrix{ \D^0 = \L^1_0 \ar[r]^-a \ar[d] & X \ar[d]^p \\ \D^1 \ar[r]_-{b \cdot i_{[0,k]}} \ar@{-->}[ur] & Y}\] This allows us to lift all 2-dimensional faces of $b$ containing the vertex 0 by filling $\L^2_0$ horns. In turn, this allows us to lift all 3-dimensional faces of $b$ containing the vertex 0 by filling $\L^3_0$ horns, and inductively we can lift all ($n-1$)-dimensional faces of $b$ containing 0 by filling $\L^{n-1}_0$ horns. These faces form a $\L^n_0$ horn in $X$ whose image under $p$ is the corresponding horn of $b$ in $Y$. We lift against the inclusion $\L^n_0 \ra \D^n$ to obtain the desired lift of our $n$-simplex $b$.
\end{proof}

\begin{cor}\label{cor1} The inclusion $\D^k \vee \D^n \ra \D^{k+n}$ is mid anodyne.
\end{cor}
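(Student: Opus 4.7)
The plan is a single application of Joyal's join lemma (Lemma~\ref{joinlem}), choosing $v$ to be the left anodyne map $i_0 \colon \D^0 \ra \D^n$ just established in the preceding lemma, and $u$ to be the vacuous monomorphism $\emptyset \hookrightarrow \D^{k-1}$ out of the initial simplicial set. The ambition is that the pushout-join $u\,\hat{\star}\,v$ will coincide with the inclusion $\D^k \vee \D^n \ra \D^{k+n}$, so that Joyal's lemma immediately delivers the conclusion.

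To confirm this identification, I would invoke the standard facts that $\D^a \star \D^b = \D^{a+b+1}$ and that $\emptyset$ is a two-sided unit for $\star$ in the augmented join convention, so that $\emptyset \star X = X$. With these, the codomain of $u\,\hat{\star}\,v$ computes as $\D^{k-1} \star \D^n = \D^{k+n}$, and the domain as
$$\emptyset \star \D^n \;\cup_{\,\emptyset \star \D^0\,}\; \D^{k-1} \star \D^0 \;=\; \D^n \cup_{\D^0} \D^k.$$
A quick vertex-level inspection inside $\D^{k+n} = \D^{k-1} \star \D^n$ shows that the $\D^k$-factor is included on vertices $\{0,\ldots,k\}$ and the $\D^n$-factor on vertices $\{k,\ldots,k+n\}$, glued along the common vertex $k$. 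Thus the domain is precisely $\D^k \vee \D^n$ and the map is the stated inclusion, so Joyal's lemma yields that it is mid anodyne.

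Finally, the boundary cases $k=0$ and $n=0$ should be dispatched separately, since then $\D^k \vee \D^n \ra \D^{k+n}$ is an identity and trivially mid anodyne. The sole obstacle I anticipate is careful bookkeeping with the join notation --- confirming the two-sided-unit behaviour of $\emptyset$ and tracking which vertex of $\D^{k+n}$ serves as the gluing point of the wedge --- but no further combinatorial argument is needed beyond Lemma~\ref{joinlem} and the left anodyne-ness of $i_0$.
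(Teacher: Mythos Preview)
Your proposal is correct and is exactly the paper's approach: apply Lemma~\ref{joinlem} with $u\colon \emptyset \to \D^{k-1}$ and $v = i_0\colon \D^0 \to \D^n$. The paper states this in a single line without verifying the identification of $u\,\hat{\star}\,v$ with the wedge inclusion or treating the boundary cases, so your unpacking is more detailed but not different in substance.
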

\begin{proof} Apply Lemma \ref{joinlem} with $u \colon \emptyset \ra \D^{k-1}$ and $v = i_0 \colon \D^0 \ra \D^n$.
\end{proof}

\begin{cor}\label{cor2} The inclusion $\D^k \cup_{[0,k]=[0,1]} \D^n \ra \D^{k+n-1}$ is mid anodyne.
\end{cor}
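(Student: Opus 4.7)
The plan is to apply Joyal's Lemma \ref{joinlem} to realize the desired inclusion as a pushout-join $u \hat\star v$, directly mirroring the proof of Corollary \ref{cor1}. There the inputs were $u \colon \emptyset \ra \D^{k-1}$ and $v = i_0 \colon \D^0 \ra \D^n$, producing the wedge inclusion $\D^k \vee \D^n \ra \D^{k+n}$; to share one additional pair of vertices I would shift $u$ up one dimension.

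Concretely, take $u = i_0 \colon \D^0 \ra \D^{k-1}$ and $v = i_0 \colon \D^0 \ra \D^{n-1}$. The previous lemma says that $v$ is left anodyne, and $u$ is evidently a monomorphism, so Joyal's Lemma gives immediately that $u \hat\star v$ is mid anodyne. The joins compute to $X \star W = \D^0 \star \D^{n-1} = \D^n$, $Y \star Z = \D^{k-1} \star \D^0 = \D^k$, $X \star Z = \D^1$, and $Y \star W = \D^{k-1} \star \D^{n-1} = \D^{k+n-1}$, matching the simplicial sets appearing in the statement.

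The only content-bearing step, and the one I would check most carefully, is the vertex bookkeeping that shows the pushout is exactly the pushout named in the corollary. The shared edge $X \star Z = \D^1$ embeds into $X \star W = \D^n$ via $1 \star v$ as the edge $[0,1]$, since the $X$-vertex sits at position $0$ of $\D^0 \star \D^{n-1}$ and $v = i_0$ carries the $Z$-vertex to the initial vertex of $W$, which is vertex $1$ of $\D^n$. Likewise $X \star Z$ embeds into $Y \star Z = \D^k$ via $u \star 1$ as the edge $[0,k]$, since $u = i_0$ sends $X$ to vertex $0$ of $Y$ while $Z$ sits at the terminal position $k$ of $\D^{k-1} \star \D^0$. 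Hence the pushout is exactly $\D^k \cup_{[0,k]=[0,1]} \D^n$ and $u \hat\star v$ is the inclusion in the statement.
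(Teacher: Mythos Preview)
Your proof is correct and follows exactly the paper's approach: both apply Lemma~\ref{joinlem} with $u = i_0 \colon \D^0 \ra \D^{k-1}$ and $v = i_0 \colon \D^0 \ra \D^{n-1}$. Your additional vertex bookkeeping, verifying that the pushout $X \star W \cup_{X \star Z} Y \star Z$ really is $\D^k \cup_{[0,k]=[0,1]} \D^n$, is a helpful elaboration of what the paper leaves implicit.
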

\begin{proof} Apply Lemma \ref{joinlem} with $u= i_0 \colon \D^0 \ra \D^{k-1}$ and $v= i_0 \colon \D^0 \ra \D^{n-1}$.
\end{proof}

We now have all of the tools necessary to prove Theorem \ref{21thm}.

\begin{proof}[Proof of Theorem \ref{21thm}]
By Theorem \ref{dsthm}, a horn $\L^2_1 \ra \C X(x,y)$ is specified by necklaces $T$ and $U$ such that $d_0T = d_1U$. We use Theorem \ref{dsrmk} to compute these faces. 

Let $U'$ be  the subnecklace of $U$ whose vertices equal the joins $J_U$ of $U$; we call $U'$ the \emph{diagonal} of $U$. The necklace $U'$ is comprised of the path of edges which form the diagonal edges of the beads of $U$; for each bead $\D^k$ of $U$, the $[0,k]$ edge of that bead appears in $U'$. By Theorem \ref{dsrmk}, $d_1 U =\overline{U'}$, the quotient which collapses each degenerate edge of $U'$ to a vertex.

Let $T^*$ to be the subnecklace of $T$ whose joins include all of the vertices of $T$; we call $T^*$ the \emph{spine} of $T$. The necklace $T^*$ is comprised of the path of edges $[i,i+1]$ between adjacently numbered vertices in each bead of $T$. By Theorem \ref{dsrmk}, $d_0 T = \overline{T^*}$, the quotient which collapses each degenerate edge of $T^*$ to a vertex.

The equation $\overline{T^*} = \overline{U'}$ says that the spine of $T$ equals the diagonal of $U$, modulo any degenerate edges. We construct a simplicial subset $A$ of $X$ that contains the data of $T$ and $U$; by construction any necklace $S$ representing a 2-simplex in $\C X(x,y)$ with $T$ and $U$ as second and zeroth faces will contain $A$. Start with the necklace $U$ and insert degenerate edges between the joins of $U$ if needed until the diagonal of this new necklace is an expansion of the spine of $T$; we might call this new necklace ``stretched''. It won't necessarily equal the spine of $T$ because it may have some extra degenerate edges, which are the diagonals of beads of $U$. To form $A$, we glue the beads of $T$ into this necklace along their spines. This can be done directly whenever the spine of the bead $\s$ of $T$ does not encounter any degenerate edges arising from the diagonal of $U$. If the stretched version of $U$ contains an extra degenerate edge at the location corresponding to the $i$-th vertex of some bead $\s$ of $T$, we replace $\s$ by the degenerate simplex $s_i \s$, which can be glued in as described above. The resulting simplicial set $A$ is obtained by gluing a ``thickened'' version of $T$, where some beads have been expanded to degenerate beads to accommodate any degenerate diagonals of $U$, to the stretched necklace $U$, which has ``gaps'' between beads occurring wherever there are degenerate edges in the spine of $T$.

\[\raisebox{.60in}{\xymatrix@R=13pt@C=13pt@M=2pt{ {}\save[]*\txt{$T=\D^2$}\restore & & & \cdot \ar[drr]^g & & \\  & \cdot \ar[rrrr] \ar[urr]^f & & & & \cdot  \\ {}\save[]{U =\D^3 \vee \D^3} \restore& & \cdot \ar[dr] \ar[dd] & & \cdot \ar[dr] \ar[dd] & \\ & \cdot \ar'[r]|f[rr] \ar[ur] \ar[dr] & & \cdot \ar'[r]|g[rr] \ar[ur] \ar[dr] & & \cdot \\ & & \cdot \ar[ur] & & \cdot \ar[ur] & }} \leadsto  \raisebox{.55in}{\xymatrix@=13pt{ {}\save[]*\txt{$A=\D^2\cup_{\L^2_1} (\D^3 \vee \D^3)$}\restore & & & \cdot \ar[dddrrr]|{\hole}|(.70){g} \ar[drr] \ar[ddr] & & & \\ & \cdot \ar[dr] \ar[urr] & & & & \cdot \ar[ddr] \ar[dl] & \\ & & \cdot \ar[uur] & & \cdot \ar[drr] & & \\
\cdot \ar[rrrrrr] \ar[uur] \ar[urr] \ar[uuurrr]|(.30){f}|{\hole} & & & & & & \cdot}}\]

We wish to fill in $A$ to obtain a new necklace $S$ in $X$ on the vertices of $A$, whose joins contain the joins of $T$ and any extra copies of these vertices that appear from degenerate diagonals of $U$.  To construct such a necklace $S$, we need to ``fill'' each simplicial subset between our so-designated joins to a simplex of the appropriate dimension; this constructs the beads one at a time. Inductively, there are only two types of fillings necessary: for one, we must expand two adjacent beads $\D^k$ and $\D^n$ to form a single bead $\D^{k+n}$. For the other we must expand two overlapping beads, where the diagonal of one is the first edge along the spine of another. Such extensions are possible because the maps of \ref{cor1} and \ref{cor2} and its dual are mid anodyne, and $X$ is a quasi-category. This construction defines a totally non-degenerate necklace $S$ in $X$ which contains $A$.

Let $\vec{S}$ be the flag $J_S \subset S^1 \subset V_S$, where $S^1$ is the vertex set of $T$ plus, for each degenerate edge of $U$, an extra copy of the corresponding vertex.  We claim that the triple $(S, k, \vec{S})$ is a filler for the horn $\L^2_1 \ra \C X(x,y)$.  The necklace representing the face $d_2 (S, \vec{S}) = ( \overline{S'}, \overline{d_2\vec{S}})$ is obtained by restricting $S$ to $S^1$, which contains the vertices of $T$ together with some extra copies arising from degenerate edges of $U$. This amounts to restricting $A$ to the same vertices, and so it is clear that the totally non-degenerate quotient $\overline{S'}$ is $T$. Hence, $(\overline{S'}, \overline{d_2\vec{S}}) = (T, \vec{T})$. Similarly, the necklace $S^*$ of $d_0(S, \vec{S}) = (\overline{S^*}, \overline{d_0\vec{S}})$ contains the beads of $U$ but also some degenerate edges between beads. This says exactly that $(\overline{S^*}, \overline{d_0\vec{S}})= (U, \vec{U})$, as desired.
\end{proof}

Recall from the introduction that the 1-simplices in the hom-spaces of $\C X$ can be thought of as ``factorisations,'' which aren't reversible. As a result, we would not expect that $\L^2_0$ or $\L^2_2$ horns in the hom-spaces can be filled in general, even if $X$ is a Kan complex: if these horns could be filled, it would follow that every 1-simplex would be invertible up to homotopy. It is easy to find examples of outer horns that cannot be filled: for any non-degenerate 2-simplex in $\C X(x,y)$, the representatives of the outer faces are proper subnecklaces of the necklace representing the inner face. Exchanging the first face for either the zeroth or the second, one obtains a $\L^2_2$ or $\L^2_0$ horn that has no filler.

Unexpectedly, many higher horns in these hom-spaces can be filled uniquely, without any hypotheses on $X$, by an immediate corollary to the main theorem of the next section.

\section{For any $X$, all $\C X(x,y)$ are 3-coskeletal}\label{cosksec}

Theorems \ref{dsthm} and \ref{dsrmk} can also be used to prove the following.

\begin{thm}\label{coskthm} For any simplicial set $X$ and vertices $x$ and $y$, the hom-space $\C X(x,y)$ is 3-coskeletal.
\end{thm}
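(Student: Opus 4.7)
To prove $\C X(x,y)$ is 3-coskeletal, I need to show that for every $m \geq 4$, any sphere $\partial \D^m \ra \C X(x,y)$ admits a unique filler $\D^m \ra \C X(x,y)$. The plan is to exploit the triple characterization of Theorem \ref{dsthm}, leveraging the rigidity of inner faces from Corollary \ref{dscor}: an inner face $d_i(T,f,\vec{T})$ retains the same necklace $T$ and map $f$, and its flag $d_i\vec{T}$ is simply $\vec{T}$ with the set $T^i$ omitted.

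I would dispatch uniqueness first. Suppose two $m$-simplices $(T,f,\vec{T})$ and $(T',f',\vec{T}')$ share all faces. Any single inner face already forces $T = T'$ and $f = f'$. Since $T^0 = J_T$ and $T^m = V_T$ are intrinsic to the necklace, and each remaining entry $T^l$ with $1 \leq l \leq m-1$ can be read off any inner face $d_i$ with $i \neq l$ --- of which at least one exists because $m \geq 3$ furnishes at least two distinct inner indices --- the flags also agree, so the two simplices coincide.

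For existence, I would start from a sphere $(\s_0,\ldots,\s_m)$ satisfying the simplicial identities $d_i\s_j = d_{j-1}\s_i$ for $i<j$, and build the filler in three stages. First, the inner faces $\s_1,\ldots,\s_{m-1}$ share a common underlying pair $(T,f)$: for each $i$ with $1 \leq i \leq m-2$, the common face $d_i\s_{i+1} = d_i\s_i$ is inner in both $\s_i$ and $\s_{i+1}$ (because $m-1 \geq 3$), so Corollary \ref{dscor} combined with the uniqueness half of Theorem \ref{dsthm} identifies the necklaces and maps of $\s_i$ and $\s_{i+1}$, and transitivity finishes the claim. Second, writing $\s_i = (T, f, \vec{T}_i)$, assemble a flag $\vec{T} = (T^0 \subset \cdots \subset T^m)$ by declaring $T^0 = J_T$, $T^m = V_T$, and, for $1 \leq l \leq m-1$, reading $T^l$ off any $\vec{T}_i$ with $i \neq l$. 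Independence of the choice of $i$ is a routine index-chase whose content is precisely the identity $d_i\vec{T}_j = d_{j-1}\vec{T}_i$ extracted from the sphere's simplicial relation, and the flag inclusions $T^l \subset T^{l+1}$ are inherited from any inner index that omits neither level. Third, check that the candidate $\s := (T,f,\vec{T})$ has the correct outer faces. Both $d_0\s$ and $\s_0$ are $(m-1)$-simplices with $m-1 \geq 3$, so the uniqueness argument above applies at that dimension, and matching inner faces suffices: for $1 \leq i \leq m-2$, $d_i(d_0\s) = d_0(d_{i+1}\s) = d_0\s_{i+1} = d_i\s_0$, using $d_{i+1}\s = \s_{i+1}$ by construction together with the simplicial identities for $\s$ and for the sphere. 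The symmetric computation at the top end yields $d_m\s = \s_m$.

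The principal obstacle is the combinatorial bookkeeping in the second stage --- unpacking $d_i\vec{T}_j = d_{j-1}\vec{T}_i$ into a precise equality of reconstructed entries $T^l$, and ensuring that the resulting flag inherits the required inclusions --- since the remainder of the argument reduces to direct applications of Theorem \ref{dsrmk} together with a single bootstrap of uniqueness at dimension $m-1$ in order to settle the outer faces at dimension $m$.
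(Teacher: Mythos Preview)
Your proposal is correct and follows essentially the same approach as the paper: both arguments use Corollary \ref{dscor} to identify the necklaces underlying the inner faces, reconstruct the flag of the putative filler from the inner face data, and then verify the outer faces via the simplicial identities combined with Corollary \ref{dscor} applied one dimension down. Your organization is slightly different --- you treat uniqueness first and phrase the outer-face verification as a bootstrap of that uniqueness argument at dimension $m-1$, whereas the paper carries out the same computation directly --- but the content is the same.
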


Recall, a simplicial set $X$ is $n$-\emph{coskeletal} if any sphere $\partial\Delta^k \ra X$, with $k>n$, can be filled uniquely to a $k$-simplex $\D^k \ra X$, where $\partial\D^k$ is the simplicial subset of $\D^k$ generated by all of its $(k-1)$-dimensional faces (cf. \cite[section 0.7]{duskinsimplicialmethods}). Morally, an $n$-coskeletal Kan complex corresponds to an $(n-1)$-type from classical homotopy theory.

\begin{proof}
We must show that for $n \geq 4$ any sphere $\partial\D^n \ra \C X(x,y)$ has a unique filler. Using Theorem \ref{dsthm}, an $n$-sphere in $\C X(x,y)$ is a collection \[(T_0, \vec{T_0}), \ldots, (T_n, \vec{T_n})\] of totally non-degenerate necklaces in $X$ accompanied by flags $\vec{T_i}$ of vertices \[J_i \subset T_i^1 \subset \cdots \subset T_i^{n-2} \subset V_i\] satisfying the relations \[d_i (T_j, \vec{T_j}) = d_{j-1} (T_i, \vec{T_i}) \hspace{.5cm} \text{for~all}\ i < j.\] By Corollary \ref{dscor} the necklaces $T_i$ are equal for $0<i<n$; we call this common necklace $S$. Furthermore, when $n \geq 4$, the relations between the inner faces define a flag $\vec{S}$ of vertices \[J_S \subset S^1 \subset \cdots \subset S^{n-1} \subset V_S\] such that $d_i \vec{S} = \vec{T_i}$, for $0 < i <n$. 

It is clear that $(S,\vec{S})$ is the only possible filler for this sphere. It remains to show that $d_0(S,\vec{S}) = (T_0, \vec{T_0})$ and $d_n(S, \vec{S}) = (T_n, \vec{T_n})$. Using any inner face $i$, we compute that \begin{equation}\label{eq1} d_{i-1} (T_0, \vec{T_0}) = d_0 (T_i, \vec{T_i}) = d_0 d_i (S, \vec{S}) = d_{i-1} d_0 (S, \vec{S}).\end{equation}
We may choose $i >1$ so that (\ref{eq1}) is an inner face relation; it follows from Corollary \ref{dscor} that $T_0$ is the necklace of $d_0 (S, \vec{S})$ and $d_{i-1}\vec{T_0} = d_{i-1}d_0\vec{S}$. By choosing a different inner face $i>1$, we learn that the omitted sets of the flags $\vec{T_0}$ and $d_0\vec{S}$ also agree, so we conclude that $\vec{T_0} = d_0 \vec{S}$, and hence that $d_0(S,\vec{S}) = (T_0, \vec{T_0})$.

Similarly, using any inner face $j$, we compute that \begin{equation}\label{eq2} d_j(T_n, \vec{T_n}) = d_{n-1} (T_j, \vec{T_j}) = d_{n-1} d_j (S, \vec{S}) = d_j d_n (S, \vec{S}).\end{equation} We may choose $j < n-1$ so that (\ref{eq2}) is an inner face relation; it follows from Corollary \ref{dscor} that $T_n$ is the necklace of $d_n(S, \vec{S})$ and $d_j \vec{T_n} = d_j \overline{d_n\vec{S}}$. We choose any other inner face $j < n-1$ to conclude that the flag $\vec{T_n} = \overline{d_n\vec{S}}$, and hence that $d_n(S,\vec{S}) = (T_n, \vec{T_n})$.
\end{proof}

It is well-known that $n$-coskeletal simplicial sets have unique fillers for all horns of dimension at least $n+2$ (cf., e.g., \cite[section 2.3]{duskinsimplicialmatrices}).

\begin{cor}\label{exactcor} For $n >4$, every horn $\L^n_k \ra \C X(x,y)$ has a unique filler.
\end{cor}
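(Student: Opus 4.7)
The plan is to derive Corollary \ref{exactcor} formally from Theorem \ref{coskthm} via the standard fact, quoted in the text, that an $n$-coskeletal simplicial set admits unique fillers for every horn of dimension at least $n+2$. Specialized to $n=3$, this yields exactly the range $n > 4$ in the statement. So the substance of the proof is simply to invoke Theorem \ref{coskthm} and cite (or briefly sketch) this general coskeletal horn-filling principle.

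For completeness I would include the short argument for the general fact. Given a horn $h \colon \L^n_k \ra \C X(x,y)$ with $n \geq 5$, the $(n-1)$-dimensional faces comprising $h$ assemble, by the simplicial identities, into a sphere $\partial\D^{n-1} \ra \C X(x,y)$ which represents the boundary of the would-be missing $k$-th face. Since $n-1 \geq 4 > 3$, Theorem \ref{coskthm} fills this sphere uniquely, producing the missing face. This upgrades $h$ to a full sphere $\partial\D^n \ra \C X(x,y)$, and since $n \geq 5 > 3$, Theorem \ref{coskthm} again supplies a unique $n$-simplex $\D^n \ra \C X(x,y)$ extending it. Uniqueness of the horn filler follows from uniqueness at each of these two stages: any two fillers agree on the horn faces, their $k$-th faces are both extensions of the same prescribed $(n-1)$-sphere and hence coincide, and then the fillers coincide as extensions of the same full $n$-sphere.

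There is no real obstacle here; the corollary is a purely formal consequence of $3$-coskeletality. The conceptual content — that spheres of dimension $\geq 4$ in $\C X(x,y)$ fill uniquely — was already handled in Theorem \ref{coskthm}, and the remaining step simply packages the horn-filling version of this statement.
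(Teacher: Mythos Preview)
Your argument is correct and establishes the corollary. The paper's proof packages the same idea a bit differently: rather than explicitly constructing the missing $k$-th face by filling an $(n-1)$-sphere and then filling the resulting $n$-sphere, it observes that for $n > 4$ the inclusion $\L^n_k \hookrightarrow \D^n$ induces an isomorphism on $3$-skeleta, so by the $\sk_3 \dashv \cosk_3$ adjunction any map $\L^n_k \ra \C X(x,y) \cong \cosk_3 \C X(x,y)$ extends uniquely over $\D^n$. Your two-step sphere-filling argument makes the mechanism more explicit and hands-on, while the paper's skeletal formulation is terser; both are standard and equivalent ways to deduce high-dimensional horn-filling from coskeletality.
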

\begin{proof} 
When $n >4$, the map $\sk_3  \L^n_k \ra \sk_3 \D^n$ induced by the inclusion is an isomorphism. Hence, any map $\sk_3 \L^n_k \ra \C X(x,y)$ can be extended uniquely to a map $\sk_3 \D^n \ra \C X(x,y)$. By adjunction, any horn $\L^n_k \ra \cosk_3 \C X(x,y) \cong \C X(x,y)$ has a unique filler.
\end{proof}

\begin{ex} Let $X$ be a simplicial set with two vertices $x$ and $y$, two edges $f$ and $g$ from $x$ to $y$, and a 2-simplex $\a$ and a 3-simplex $\s$ as shown.
\[\xymatrix{ &  & & & x \ar[dd]^(.7){f} \ar[dr]^g & \\ x \ar[dr]_f \ar[rr]^g & & y & x \ar'[r]^{g}[rr] \ar[dr]_f \ar[ur]^{s_0x} & & y \\ &y \ar[ur]_{s_0y} & & & y \ar[ur]_{s_0y} }\] The necklaces $T_0 = T_3 = \D^2$ mapping onto $\a$ and $T_1 = T_2 = \D^3$ mapping onto $\s$, with flags \begin{align*} &\vec{T_0} & \{0,2\} \subset \{0,1,2\} \subset \{0,1,2\} \\ &\vec{T_1} &\{0,3\} \subset \{0,2,3\} \subset \{0,1,2,3\} \\ &\vec{T_2} & \{0,3\} \subset \{0,1,3\} \subset \{0,1,2,3\} \\ &\vec{T_3} &\{0,2\} \subset \{0,2\} \subset \{0,1,2\} \end{align*} define a 3-sphere in $\C X(x,y)$. This sphere cannot be filled because there is no flag $\vec{S}$ with $d_1 \vec{S} = \vec{T_1}$ and $d_2 \vec{S} = \vec{T_2}$. Hence, $\C X(x,y)$ is not 2-coskeletal, a fact that can also be verified by direct computation of $\C X$.
\end{ex}

This shows that the result of Theorem \ref{coskthm} is the strongest possible.

\section{$\C X$ is not locally quasi, if the quasi-category $X$ is not a category}\label{charsec}

In light of the prior results, one might hope that all inner horns in the hom-spaces of the simplicial category associated to a quasi-category can be filled. However, we will show in this section that for any quasi-category $X$ that is not the nerve of an ordinary category, we can find a hom-space in $\C X$ that is not a quasi-category, proving the following theorem.

\begin{thm}\label{charthm} If $X$ is a quasi-category and its simplicial category $\C X$ is locally quasi, then $X$ is isomorphic to the nerve of a category. 
\end{thm}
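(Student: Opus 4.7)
The plan is to prove the contrapositive: assuming $X$ is a quasi-category not isomorphic to any nerve, I will exhibit a hom-space $\C X(x,z)$ that fails to be a quasi-category. A standard consequence of Joyal's characterization of nerves is that a quasi-category $X$ is isomorphic to $N(\operatorname{Ho} X)$ precisely when every $\L^2_1 \to X$ has a unique filler. So my starting data is a pair $f\colon x\to y$, $g\colon y\to z$ together with two distinct 2-simplices $\alpha,\beta\colon\D^2\to X$ both filling the $\L^2_1$ horn $(f,g)$. I would split into two cases according to whether $\alpha$ and $\beta$ share their diagonal.

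The goal in each case is to write down an inner $\L^3_k$ horn in $\C X(x,z)$ that admits no filler. Both $\alpha$ and $\beta$ yield 1-simplices of $\C X(x,z)$ with target $gf$ (the necklace $\D^1\vee\D^1$), but these share their common target and cannot directly form an inner 2-horn. To build a 3-horn, I would first use the quasi-category structure of $X$ to fill an inner 3-horn in $X$ whose faces include $\alpha$ and $\beta$, producing a 3-simplex $\sigma\colon\D^3\to X$; each such $\sigma$ then yields 2-simplices of $\C X(x,z)$ via triples $(\D^3\text{ via }\sigma,\{0,3\}\subset T^1\subset\{0,1,2,3\})$ for the two non-degenerate choices of $T^1$. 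Computing their faces via Theorem \ref{dsrmk} shows that these 2-simplices have $d_2$-faces $[\alpha]$ and $[\beta]$ respectively, and these — together with carefully chosen degenerate 2-simplices — assemble into a candidate $\L^3_1$ horn in $\C X(x,z)$.

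Non-fillability of the resulting horn would be argued as follows. Corollary \ref{dscor} forces the inner face of any putative filler $(S,\vec S)$ to pin down $S$ as the necklace of that inner face, so the only data left to choose is the flag $\vec S = (J_S\subset S^1\subset S^2\subset V_S)$. The two outer faces $d_0$ and $d_3$ then constrain, respectively, the $S^1$-splitting of $S$ and the $S^2$-restriction of $S$ via the formulas of Theorem \ref{dsrmk}. By design, these constraints together would demand that a single bead of $S$ simultaneously map to $\alpha$ and to $\beta$ in the same-diagonal case, or that a single edge of $X$ serve as both diagonals in the distinct-diagonal case, contradicting $\alpha\neq\beta$. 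The 3-coskeletality of Theorem \ref{coskthm} (together with Theorem \ref{21thm} for dimension 2) makes this single three-dimensional obstruction enough to show $\C X(x,z)$ is not a quasi-category.

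The main obstacle is precisely the combinatorial design of the three face-triples $(T_j,\vec{T_j})$ for $j\in\{0,2,3\}$: they must simultaneously (i) satisfy the Dugger--Spivak face-compatibility relations $d_i(T_j,\vec{T_j}) = d_{j-1}(T_i,\vec{T_i})$ so as to define a valid $\L^3_1$ horn, and (ii) leave no room for a filler flag $\vec S$ without forcing $\alpha=\beta$. Once the correct horn is written down, the non-fillability is a direct verification using Theorem \ref{dsrmk}, Lemma \ref{tndlem}, and Corollary \ref{dscor}.
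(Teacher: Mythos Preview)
Your reduction via Joyal's characterization (a quasi-category is a nerve iff $\L^2_1$-fillers are unique) is correct and tidier than the paper's case analysis on 2-coskeletality and low-dimensional horn uniqueness. The gap is in the construction of the unfillable horn. You want the two 2-simplices $(\D^3,\s,\{0,3\}\subset T^1\subset [3])$ for $T^1 \in\{ \{0,1,3\},\{0,2,3\}\}$ to have $d_2$-faces $[\a]$ and $[\b]$; by Theorem~\ref{dsrmk} these $d_2$-faces are the restrictions $d_2\s$ and $d_1\s$ respectively. But an inner 3-horn in $X$ omits exactly one of the faces $d_1$ or $d_2$, so after filling you control only one of $d_1\s,d_2\s$, never both. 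Placing $\a$ and $\b$ on the faces you \emph{can} specify (e.g.\ $d_2\s=\a$, $d_3\s=\b$, with $\s$ having vertex sequence $x,y,z,z$) does not make $[\b]$ appear as the $d_2$-face of either 2-simplex above, since $d_3\s$ is the face on $\{0,1,2\}$ and is not the restriction of $\s$ to any $T^1\supset\{0,3\}$. The two 2-simplices built from a single $\s$ therefore do not carry the data of both $\a$ and $\b$ needed to force a contradiction.

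The paper's fix is to produce \emph{two} 3-simplices: fill two $\L^3_1$ horns in $X$ with the same zeroth face (a degeneracy) and the same second face $\a$, but with third face equal to $\a$ and to $\b$ respectively, obtaining $\s\neq\t$ with $d_0\s=d_0\t$ and $d_2\s=d_2\t$ (this is Lemma~\ref{lowdimlem}). The unfillable $\L^3_1$ horn in $\C X(x,z)$ then has second and third faces $(\D^3,\s,\{0,3\}\subset\{0,1,3\}\subset[3])$ and $(\D^3,\t,\{0,3\}\subset\{0,1,3\}\subset[3])$, with zeroth face their common $\{0,1,3\}$-splitting; Corollary~\ref{dscor} forces any filler to have underlying necklace $(\D^3,\s)$, so its third face is a restriction of $\s$ and cannot equal $(\D^3,\t)$. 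This argument works whether or not $d_1\a=d_1\b$, so your same-diagonal/different-diagonal case split is unnecessary.
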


More explicitly, we will show that if $X$ is any quasi-category that fails to satisfy either of the following conditions \begin{enumerate} \item $X$ is 2-coskeletal \item $X$ has unique fillers for $\L^2_1$, $\L^3_1$, and $\L^3_2$ horns\end{enumerate} then there is a $\L^3_1$ horn in some hom-space of $\C X$ that cannot be filled. By a lemma below, any quasi-category that satisfies both of these conditions is isomorphic to the nerve of a category. Thus, if $X$ is a quasi-category such that the simplicial category $\C X$ is locally quasi, then $X$ must be the nerve of a category. In the following section, we complete our characterization of the quasi-categories whose simplicial categories are locally quasi, by considering the case where $X$ is isomorphic to the nerve of a category.\footnote{Note that the simplicial category $\C\L^2_1$ is locally quasi, its three non-empty hom-spaces being trivial, but $\L^2_1$ is not itself a quasi-category. We don't concern ourselves with the technicalities of determining which non-quasi-categories have locally quasi simplicial categories as a fluke.} 

It is well-known that a simplicial set is isomorphic to the nerve of a category if and only if every inner horn has a unique filler. A third equivalent condition is given in the following lemma, though we only prove the implication needed here. For a full proof see \cite[proposition 1.13]{joyaltheoryI}.

\begin{lem}\label{catlem} Let $X$ be a 2-coskeletal simplicial set such that for $n=2$ or $3$ every inner horn $\L^n_k \ra X$ has a unique filler. Then $X$ is isomorphic to the nerve of a category.
\end{lem}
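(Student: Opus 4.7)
The plan is to extract a category $\A$ from the low-dimensional data of $X$ whose nerve agrees with $X$ on the 2-skeleton; 2-coskeletality will then upgrade this to an isomorphism $X \cong N\A$. First I would set $\ob \A = X_0$, take the morphisms $x \to y$ to be the edges of $X$ from $x$ to $y$, and declare $s_0 x$ to be the identity on $x$. Given composable edges $f\colon x \ra y$ and $g \colon y \ra z$, they assemble into a horn $\L^2_1 \ra X$ whose unique filler is a 2-simplex $\a$ with $d_2 \a = f$ and $d_0 \a = g$; set $g \circ f := d_1 \a$. The unit laws fall out of the degenerate 2-simplices $s_0 f$ and $s_1 f$, which fill the horns $(s_0 x, f)$ and $(f, s_0 y)$ and so force $f \circ s_0 x = f = s_0 y \circ f$ by uniqueness.

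For associativity, given composable $f, g, h$ on vertices $w, x, y, z$, I would form the $\L^3_1$ horn whose $d_3$, $d_0$, and $d_2$ faces are the 2-simplices witnessing $gf$, $hg$, and $(hg)f$, respectively. The unique 3-simplex $\s$ filling this horn has a $d_1$-face which, by the simplicial identities, is a 2-simplex on $w, y, z$ with $d_2 = gf$, $d_0 = h$, and $d_1 = (hg)f$. But the unique 2-simplex with outer faces $gf$ and $h$ is, by definition, the one with middle face $h \circ (gf)$, so the uniqueness of $\L^2_1$-fillers gives $(hg) f = h(gf)$. Hence $\A$ is a category.

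Finally, sending each edge of $X$ to itself regarded as a morphism of $\A$ extends tautologically to a bijection $\sk_2 X \to \sk_2 N\A$: a 2-simplex of $X$ with $d_2 = f$, $d_0 = g$, $d_1 = h$ corresponds to the composable pair $(g, f)$ in $\A$ with $g \circ f = h$, and uniqueness of $\L^2_1$-fillers on both sides makes the assignment bijective. Both $X$ and $N\A$ are 2-coskeletal --- the former by hypothesis, the latter because associativity and unit laws make every sphere of dimension at least 3 uniquely fillable --- so this 2-skeletal bijection extends uniquely to the desired isomorphism $X \cong N\A$. The main obstacle is the associativity step: one must carefully align the edge labels of the candidate 3-simplex with the composites already defined from 2-simplex fillers, after which the $\L^3_1$ uniqueness hypothesis does exactly the work required; the rest of the argument is bookkeeping around the uniqueness of inner horn extensions.
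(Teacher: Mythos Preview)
Your proof is correct, but it takes a different route from the paper's. The paper argues in one line: since $X$ is 2-coskeletal, the argument of Corollary~\ref{exactcor} shows that every inner horn $\L^n_k \ra X$ with $n > 3$ has a unique filler (because $\sk_2 \L^n_k \ra \sk_2 \D^n$ is an isomorphism for such $n$), and combining this with the hypothesis for $n=2,3$ gives unique inner horn fillers in all dimensions; the conclusion then follows from the well-known characterization of nerves of categories, which the paper cites rather than reproves. Your approach instead builds the category $\A$ by hand from the 2-truncation, uses the unique $\L^3_1$-filler to verify associativity, and invokes 2-coskeletality only at the end to promote the identification $\sk_2 X \cong \sk_2 N\A$ to an isomorphism. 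The paper's argument is shorter but leans on a black-boxed equivalence; yours is self-contained and makes explicit exactly how the $n=3$ hypothesis encodes associativity, which is arguably more informative for a reader who has not internalized the nerve characterization.
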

\begin{proof}
By the argument given for Corollary \ref{exactcor}, any horn $\L^n_k \ra X$ with $n>3$ has a unique filler.
\end{proof}

The following lemmas give explicit $\L^3_1$ horns that cannot be filled.

\begin{lem}\label{badexlem} Let  $X$ be a quasi-category with distinct $n$-simplices $\s$ and $\t$ with the same boundary, for some $n \geq 3$. Let $x$ and $y$ be the common initial and terminal vertices, respectively, of these simplices. Then there is a horn $\L^3_1 \ra \C X(x,y)$ with no filler.
\end{lem}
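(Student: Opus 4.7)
The plan is to construct an explicit $\L^3_1$ horn $(\a_0, \a_2, \a_3)$ in $\C X(x,y)$ which cannot be filled, exploiting the fact that $\s$ and $\t$ agree on every proper face of $\D^n$ but differ at the top. I would pick any $k$ with $1 \leq k \leq n-1$ (possible since $n \geq 3$) and set
\[\a_2 = (\D^n, \s, \{0,n\} \subset \{0,k,n\} \subset [n]), \quad \a_3 = (\D^n, \t, \{0,n\} \subset \{0,k,n\} \subset [n]), \quad \a_0 = s_1(d_0\a_2).\]

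The first step is to verify the $\L^3_1$ horn relations using Theorem \ref{dsrmk}. Because $\{0,k,n\} \subsetneq [n]$, the outer face formulas produce necklaces whose beads have dimension strictly less than $n$ (namely $\D^k \vee \D^{n-k}$ for $d_0$ and the $2$-simplex $\D^n|_{\{0,k,n\}}$ for $d_2$), so the restrictions of $\s$ and $\t$ appearing there coincide by the same-boundary hypothesis. This yields $d_0\a_2 = d_0\a_3$ and $d_2\a_2 = d_2\a_3$. The degeneracy identities $d_1 s_1 = d_2 s_1 = \id$ then force $d_1\a_0 = d_2\a_0 = d_0\a_2 = d_0\a_3$, so all three horn relations $d_0\a_2 = d_1\a_0$, $d_0\a_3 = d_2\a_0$, $d_2\a_2 = d_2\a_3$ are satisfied.

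Next, I would rule out any filler. Suppose $(S, g, \vec{S})$ with $\vec{S}\colon J \subset S^1 \subset S^2 \subset V$ fills the horn. Matching $d_2(S, g, \vec{S}) = \a_2$ and invoking Corollary \ref{dscor} (inner faces preserve the necklace) pins down $S = \D^n$, $g = \s$, $J = \{0,n\}$, $S^1 = \{0,k,n\}$, $V = [n]$. Then $d_3(S, g, \vec{S}) = (\D^n|_{S^2}, \s|_{S^2}, \{0,n\} \subset \{0,k,n\} \subset S^2)$ must equal $\a_3 = (\D^n, \t, \{0,n\} \subset \{0,k,n\} \subset [n])$. Matching the necklace forces $S^2 = [n]$, and then matching the map gives $\s = \s|_{[n]} = \t$, contradicting $\s \neq \t$.

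The main obstacle is balancing the requirement that the horn be legitimate against the requirement that no filler exist. Placing $\t$ only in $\a_3$ is essential: among the four faces of a $3$-simplex, $d_3$ is the one whose formula can lose top-dimensional map data, via the subnecklace $S|_{S^2}$. The flag choice $\{0,n\} \subset \{0,k,n\} \subset [n]$ with $k$ strictly interior is what then collapses the outer faces of both $\a_2$ and $\a_3$ onto common boundary data, keeping the horn valid while still producing a contradiction for any filler. The hypothesis $n \geq 3$ is used precisely to supply such an interior $k$, making $\{0,k,n\}$ a proper subset of $[n]$.
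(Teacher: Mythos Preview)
Your proof is correct and follows essentially the same strategy as the paper's. The paper chooses an arbitrary proper subset $J$ with $\{0,n\} \subsetneq J \subsetneq [n]$ while you take the specific choice $J=\{0,k,n\}$, and the paper writes the zeroth face explicitly as $(U,\ J\subset [n]\subset [n])$ where $U$ is the $J$-splitting of $\D^n$, which is exactly your $s_1(d_0\a_2)$. One small point you gloss over (and the paper addresses only in passing): one of $\s,\t$ could be degenerate, in which case $\a_2$ as written is not totally non-degenerate and the identification ``$g=\s$'' is not literally correct; but since $\t$ may be assumed non-degenerate, the necklace of $\a_3$ really is $\D^n$, while any restriction of the (possibly smaller) necklace underlying $\a_2$ has top bead of dimension $<n$, so the contradiction persists.
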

\begin{proof}
Degenerate simplices with given boundaries are unique, so at least one of $\s$ or $\t$ is non-degenerate; assume it is $\t$. Let $S$ and $T$ equal $\D^n$ with maps $S \ra X$ and $T \ra Y$ that send the unique bead to $\s$ and $\t$, respectively. Pick some proper subset $J$ of vertices of $\D^n$ that contains both the initial and final vertices and at least one other. Let $U$ be the maximal subnecklace of $\D^n$ that has these vertices as joins; i.e., let $U$ be the $J$-splitting of $\D^n$. There is a natural map $U \ra X$ that factors through both $S$ and $T$ because the image of $U$ in $X$ sits inside the common boundary of $\s$ and $\t$.

We claim that $U$, $S$, and $T$ form, respectively, the zeroth, second, and third faces of a horn $\L^3_1 \ra \C X(x,y)$ when given flags \begin{align*} &\vec{U} & J \subset [n] \subset [n] \\ &\vec{S} & \{0,n\} \subset J \subset [n] \\ & \vec{T} & \{0,n\} \subset J \subset [n] \end{align*} where we replace $(U, \vec{U})$ and $(S, \vec{S})$ by their totally non-degenerate quotients if necessary; we ignore this possibility in our notation as it does not change the argument in any substantial way. To prove this, we must check that $d_0 S = d_1 U$, $d_0 T = d_2 U$, and $d_2 S = d_2 T$. The first two equations say that the $J$-splittings of $S$ and $T$ are $U$, which is true by definition. The final equation says that $S$ and $T$ are the same when restricted to $J$, which is true because the image of this restriction lies in the common boundary of $\s$ and $\t$.

A filler of this horn would necessarily have $(S, \vec{S})$ as its second face. By Corollary \ref{dscor}, any such 3-simplex must be given by the necklace $S$ itself together with the flag \[\{0,n\} \subset J \subset S^2 \subset [n].\] But the third face any such 3-simplex can't possibly be represented by the necklace $T$. Hence, this horn has no filler.
\end{proof}

\begin{lem}\label{lowdimlem} Let $X$ be a quasi-category with distinct 3-simplices $\s$ and $\t$ such that $d_0 \s = d_0 \t$ and $d_2 \s = d_2 \t$. Let $x$ and  $y$ be the common initial and terminal vertices, respectively, of these simplices. Then there is a horn $\L^3_1 \ra \C X(x,y)$ with no filler.
\end{lem}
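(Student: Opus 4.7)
The plan is to mimic the construction in the proof of Lemma \ref{badexlem}, specialized to $n = 3$ and to the subset $J = \{0, 1, 3\} \subset [3]$. The point is that the $J$-splitting of $\D^3$ is $\D^1 \vee \D^2$, whose two beads are the edge $[0,1]$ and the face $d_0\D^3$. The edge $[0,1]$ sits inside the $d_2$-face of $\D^3$, so the hypotheses $d_0\s = d_0\t$ and $d_2\s = d_2\t$ imply that $\s$ and $\t$ restrict to the same map along this $J$-splitting, which is exactly the agreement that Lemma \ref{badexlem} needed (in its case, coming from full boundary agreement). The other possible choice $J = \{0, 2, 3\}$ would instead invoke $d_3\s = d_3\t$, which is not given, so $J$ is essentially forced.

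Concretely, I would define three $2$-simplices in $\C X(x,y)$: $S = (\D^3, \s, \{0,3\} \subset \{0,1,3\} \subset [3])$, $T = (\D^3, \t, \{0,3\} \subset \{0,1,3\} \subset [3])$, and $U = (\D^1 \vee \D^2, g, \{0,1,3\} \subset [3] \subset [3])$, where $g$ sends the $\D^1$-bead to the common edge $[0,1]$ and the $\D^2$-bead to $d_0\s = d_0\t$, tacitly passing to totally non-degenerate quotients as in Lemma \ref{badexlem}. I would then apply Theorem \ref{dsrmk} to verify the three horn compatibilities: both $d_0 S$ and $d_0 T$ unfold to $(\D^1 \vee \D^2, g, \{0,1,3\} \subset [3])$, which equals $d_1 U$ (an inner face, so the necklace is preserved) and also $d_2 U$ (essentially trivial because $U^1 = V_U$ makes the restriction the identity); and the equality $d_2 S = d_2 T$ reduces, after restricting both necklaces to the face $\{0,1,3\}$, to the hypothesis $d_2\s = d_2\t$.

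To show no filler exists, I would run the argument of Lemma \ref{badexlem} verbatim. Any filler $(R, h, \vec{R})$ has inner face $d_2$ equal to $S$, and by Corollary \ref{dscor} this forces $R = \D^3$, $h = \s$, $R^0 = \{0,3\}$, $R^1 = \{0,1,3\}$, $R^3 = [3]$, leaving $R^2 \in \{\{0,1,3\}, [3]\}$ as the only freedom. The required third face $d_3(R, h, \vec{R}) = (\overline{R|_{R^2}}, \overline{h|}, \overline{d_3\vec{R}})$ must equal $T$; but if $R^2 = \{0,1,3\}$ then its necklace is $\D^2$ rather than $\D^3$, while if $R^2 = [3]$ then its necklace is $\D^3$ but with map $\s \neq \t$, so in neither case does it match $T$. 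The main obstacle is not a difficult computation but the bookkeeping: one must arrange the flag on $U$ so that $U^1 = V_U$ (ensuring $d_2 U$ remains on the full necklace $\D^1 \vee \D^2$ rather than collapsing), and one must check that the two face agreements in the hypothesis correspond precisely to the two beads of the chosen $J$-splitting.
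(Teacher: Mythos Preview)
Your proposal is correct and follows exactly the paper's approach: the paper's proof of this lemma simply says that the argument of Lemma~\ref{badexlem} with $n=3$ and $J=\{0,1,3\}$ applies, and that the horn conditions reduce to $\s$ and $\t$ sharing their zeroth and second faces. Your detailed unpacking of the face computations and of why $J=\{0,1,3\}$ is the forced choice is accurate and more explicit than what the paper records.
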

\begin{proof}
The argument given in the above proof with $n=3$ and $J = \{0,1, 3\}$ applies in this case. The conditions that $U$, $S$, and $T$ form a $\L^3_1$ horn amount to the requirement that the simplices $\s$ and $\t$ in the images of $S$ and $T$ respectively share common zeroth and second faces.
\end{proof}

\begin{proof}[Proof of Theorem \ref{charthm}] If $X$ is not 2-coskeletal, then either there exist distinct $n$-simplices with the same boundary, in which case we may apply Lemma \ref{badexlem}, or there exists an $n$-sphere in $X$ that cannot be filled for some $n>2$. In the latter case, choose $0 < i < n$ and fill the $\L^n_i$ horn contained in the $n$-sphere to obtain, as the $i$-th face of the filler, and ($n-1$)-simplex, necessarily distinct from the one appearing in the sphere, but with the same boundary. Unless $n$ was equal to 3, we have found distinct simplices of dimension 3 or higher with the same boundary, and we can apply Lemma \ref{badexlem} to show that $\C X$ is not locally quasi. 

In the remaining case, we have two distinct 2-simplices $\a$ and $\b$ with common boundary. We construct the 3-simplices $\s$ and $\t$ of Lemma \ref{lowdimlem} by filling two $\L^3_1$-horns with second face $\a$, third face either $\a$ or $\b$, and zeroth face the appropriate degeneracy. Applying Lemma \ref{lowdimlem}, we conclude that $\C X$ is not locally quasi if $X$ is not 2-coskeletal.

Alternatively, if $X$ has some $\L^3_2$ or $\L^3_1$ horn with two distinct fillers, it is clear that Lemma \ref{lowdimlem} or the dual argument can be applied. If $X$ has some $\L^2_1$ horn with distinct fillers $\a$ and $\b$, we repeat the construction just given, noting that it does not matter if $d_1 \a \neq d_1 \b$. Thus, if $X$ does not have unique fillers for low dimensional inner horns, then $\C X$ is not locally quasi.
\end{proof}

\section{Simplicial categories associated to categories}\label{catsec}

It remains to consider the simplicial categories $\C N\A$ associated to (ordinary) nerves of categories $\A$; $\C N\A$ is often referred to as the ``simplicial thickening'' of $\A$. More specifically, $\C N\A$ is the cofibrant replacement of $\A$, regarded as a trivial simplicial category, in the usual model structure for simplicial categories \cite{bergnersimplicialcategories}. In this section, we show that the hom-spaces of $\C X$ are 2-coskeletal when $X$ is isomorphic to the nerve of a category. We then provide examples of $\L^3_1$ and $\L^3_2$ horns in some hom-spaces that cannot be filled, whenever the category has a non-trivial factorization of an identity morphism. Finally, we show that the simplicial category $\C N\A$ is isomorphic to another known cofibrant replacement of $\A$ in $\sCat$: namely, the standard free simplicial resolution of Section \ref{intsec}. 

Throughout this section, we assume that $X$ is isomorphic to the nerve of a category. We begin by noting an obvious lemma with a useful corollary.

\begin{lem} A simplex in $X$ is degenerate if and only if its spine contains a degenerate edge.
\end{lem}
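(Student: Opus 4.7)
The plan is to exploit the explicit description of the nerve. An $n$-simplex of $X \cong N\mathcal{A}$ is a composable sequence of morphisms $\sigma = (f_1, \ldots, f_n)$ in $\mathcal{A}$, and under this identification the spine of $\sigma$ is precisely the sequence of edges $f_1, f_2, \ldots, f_n$. Moreover, a 1-simplex of $N\mathcal{A}$ is degenerate if and only if it is an identity morphism of $\mathcal{A}$, since the unique degeneracy map $s_0 \colon N\mathcal{A}_0 \to N\mathcal{A}_1$ sends an object to its identity morphism.

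For the ``only if'' direction, I would observe that $\sigma$ is degenerate if and only if $\sigma = s_i \tau$ for some $(n-1)$-simplex $\tau$ and some $0 \le i \le n-1$. Using the standard formula for the simplicial operator $s_i \colon N\mathcal{A}_{n-1} \to N\mathcal{A}_n$, which inserts an identity morphism into the $(i+1)$-st slot of the composable string, we see that some $f_j$ must be an identity, and hence the corresponding edge of the spine is degenerate.

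For the converse, suppose some $f_j$ is an identity morphism, say $f_j = \mathrm{id}_a$ where $a$ is its common source and target. Then $(f_1, \ldots, f_{j-1}, f_{j+1}, \ldots, f_n)$ is still a composable string (since $\mathrm{id}_a$ is the identity at an object appearing in $\sigma$), giving an $(n-1)$-simplex $\tau$ with $s_{j-1} \tau = \sigma$. Hence $\sigma$ is degenerate.

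There is no real obstacle here: the statement is essentially a direct unpacking of the definition of the nerve together with the fact that composable strings in $\mathcal{A}$ are recovered uniquely from their spines. The only minor care needed is keeping the indexing convention for the degeneracy maps straight so that ``identity in position $j$'' matches up with ``$\sigma$ lies in the image of $s_{j-1}$.''
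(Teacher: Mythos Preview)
Your argument is correct. The paper itself gives no proof of this lemma at all --- it is introduced as ``an obvious lemma'' and left to the reader --- so there is nothing to compare against beyond noting that your unpacking of the nerve is exactly the reasoning the author has in mind.
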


\begin{cor}\label{nicecor} If $T \ra X$ is a totally non-degenerate necklace and $J_T \subset K \subset V_T$ is a collection of vertices of $T$ containing the joins, then the $K$-splitting of $T$ is totally non-degenerate.
\end{cor}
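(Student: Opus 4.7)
My plan is to reduce the claim directly to the preceding lemma by carefully analyzing the bead structure of the $K$-splitting. First, I would recall from Theorem \ref{dsrmk} how the $K$-splitting is built: each bead $\D^{n_i}$ of $T$ is replaced by a subnecklace whose beads have vertex sets that are consecutive blocks of original vertices between adjacent $K$-vertices inside $\D^{n_i}$. Concretely, if $k_0 < k_1 < \cdots < k_s$ are the $K$-vertices of $\D^{n_i}$ (automatically starting at the initial and ending at the final vertex of the bead, since $J_T \subset K$), the bead gets replaced by $\D^{k_1 - k_0} \vee \D^{k_2 - k_1} \vee \cdots \vee \D^{k_s - k_{s-1}}$, whose $j$-th factor occupies the consecutive vertex range $\{k_{j-1}, k_{j-1}+1, \ldots, k_j\}$ of the original bead.

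The key observation is then that the spine of each new sub-bead, viewed as a simplex of $X$, is a contiguous segment of the spine of the original bead $\D^{n_i}$. Specifically, the sub-bead spanning $\{k_{j-1}, k_{j-1}+1, \ldots, k_j\}$ has as its spine edges precisely $[k_{j-1},k_{j-1}+1], [k_{j-1}+1, k_{j-1}+2], \ldots, [k_j - 1, k_j]$, each of which is already a spine edge of $\D^{n_i}$. I would then apply the preceding lemma twice: since $T$ is totally non-degenerate, each original bead maps to a non-degenerate simplex, so by the lemma all of its spine edges are non-degenerate in $X$; consequently every spine edge of every new sub-bead is non-degenerate, and applying the lemma in the reverse direction shows each sub-bead itself maps to a non-degenerate simplex. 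Assembling over all beads of $T$, the $K$-splitting is totally non-degenerate.

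The main (in fact, only) obstacle is interpreting the $K$-splitting correctly from the rather terse description in Theorem \ref{dsrmk}: one must recognize that each new sub-bead contains \emph{every} vertex of $\D^{n_i}$ between two consecutive $K$-vertices, not merely the $K$-vertices themselves, so that the dimensions of the new beads sum to $n_i$ and their spines literally sit inside the spine of $\D^{n_i}$. Once this combinatorial point is clear, no further work is required; the non-degeneracy criterion of the preceding lemma does all the heavy lifting.
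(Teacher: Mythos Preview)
Your argument is correct and is essentially the paper's own proof, spelled out in more detail: the paper simply observes that the spine of $T$ equals the spine of its $K$-splitting and invokes the preceding lemma. Your bead-by-bead analysis just makes this equality of spines explicit.
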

\begin{proof} The spine of $T$ equals the spine of its $K$-splitting.
\end{proof}

\begin{rmk}\label{nicermk} In particular, if $(T, \vec{T})$ is totally non-degenerate, its zeroth face is the $T^1$-splitting of $T$ with flag $d_0 \vec{T}$. In practice this that we can recover much of the data of the flag $\vec{T}$ from its zeroth face, as well as from the inner faces.
\end{rmk}

\begin{thm}\label{catcoskthm} For any simplicial set $X$ that is the nerve of a category and any objects $x$ and $y$, the hom-space $\C X(x,y)$ is 2-coskeletal.
\end{thm}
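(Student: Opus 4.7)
By Theorem \ref{coskthm} we already know $\C X(x,y)$ is 3-coskeletal for any $X$, so it suffices to show that every sphere $\partial \D^3 \to \C X(x,y)$ admits a unique filler. By Theorem \ref{dsthm}, such a sphere is a tuple $(T_i, \vec{T_i})_{i=0}^3$ of totally non-degenerate triples satisfying the simplicial identities, and by Corollary \ref{dscor} applied to its two inner faces the underlying necklaces agree: $T_1 = T_2 =: S$. Any filler must have the form $(S, \vec{S})$ with $\vec{S} = (J_S \subset S^1 \subset S^2 \subset V_S)$, and comparing $d_1 (S,\vec{S}) = (S, \vec{T_1})$ and $d_2 (S, \vec{S}) = (S, \vec{T_2})$ forces $S^2 = T_1^1$ and $S^1 = T_2^1$. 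This already proves uniqueness; the work lies in verifying that this candidate is well-defined --- i.e.\ that $T_2^1 \subset T_1^1$ --- and that its outer faces reproduce $(T_0, \vec{T_0})$ and $(T_3, \vec{T_3})$.

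The key leverage is Corollary \ref{nicecor} together with Remark \ref{nicermk}: when $X = N\A$, every splitting of a totally non-degenerate necklace is itself totally non-degenerate, so no Eilenberg-Zilber collapse is needed and the face formulas of Theorem \ref{dsrmk} become strict equalities of (necklace, flag) data rather than equalities up to totally non-degenerate quotient. I would exploit this by reading the $1$-simplex sphere identities $d_0 T_1 = d_0 T_0$ and $d_0 T_2 = d_1 T_0$ literally. The second says that the necklace $T_0$ equals the $T_2^1$-splitting of $S$, so in particular $J_{T_0} = T_2^1$ and $V_{T_0} = V_S$; the first then identifies the $T_0^1$-splitting of $T_0$ with the $T_1^1$-splitting of $S$, and comparing joins forces $T_0^1 = T_1^1$. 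Since $\vec{T_0}$ is a valid flag we have $J_{T_0} \subset T_0^1$, which is precisely $T_2^1 \subset T_1^1 = S^2$, the desired compatibility.

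A dual analysis applied to the sphere identities $d_2 T_3 = d_2 T_2$ and $d_1 T_3 = d_2 T_1$ would identify $T_3$ as the restriction of $S$ to $T_1^1$ with $T_3^1 = T_2^1$. The outer face identities $d_0 (S, \vec{S}) = (T_0, \vec{T_0})$ and $d_3 (S, \vec{S}) = (T_3, \vec{T_3})$ then follow by direct computation with Theorem \ref{dsrmk} from the data just extracted, and the remaining sphere identity $d_0 T_3 = d_2 T_0$ reduces to the fact that splitting and restriction of $S$ along the nested chain $J_S \subset T_2^1 \subset T_1^1 \subset V_S$ commute. I expect the main obstacle to be largely notational --- keeping careful track of which set-theoretic data lives on which necklace as faces are iterated --- since the essential point, that Corollary \ref{nicecor} eliminates the degeneracy ambiguity that lets $\C X(x,y)$ fail to be $2$-coskeletal in general (cf.\ the example following Corollary \ref{exactcor}), is very clean.
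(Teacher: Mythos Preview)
Your argument follows the paper's almost exactly: reduce to $3$-spheres via Theorem~\ref{coskthm}, identify $T_1 = T_2 =: S$ via Corollary~\ref{dscor}, set $S^1 = T_2^1$ and $S^2 = T_1^1$, and exploit Corollary~\ref{nicecor} and Remark~\ref{nicermk} to read the zeroth-face relations literally and thereby establish $T_2^1 \subset T_1^1$ from the data of $(T_0,\vec{T_0})$.

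The one point needing care is your ``dual analysis'' for $T_3$. Corollary~\ref{nicecor} is asymmetric: it guarantees that \emph{splittings} (zeroth faces) of totally non-degenerate necklaces in a nerve remain totally non-degenerate, but says nothing about \emph{restrictions} (last faces), which can still collapse --- this is precisely the $rs = \id$ phenomenon in the example following the theorem. Consequently the identity $d_2 T_3 = d_2 T_2$ you propose to use involves two last faces that may both require passage to a totally non-degenerate quotient, and recovering $T_3^1$ from that equality is not transparent. The paper instead uses the identity $d_0 T_3 = d_2 T_0$ (this is relation~(\ref{eq2}) with $j=0$): since $d_0 T_3$ is a splitting, Remark~\ref{nicermk} applies directly and the join set of the resulting necklace pins down $T_3^1$ on the nose. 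With that single substitution your sketch is complete and coincides with the paper's argument.
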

\begin{proof}
By Theorem \ref{coskthm}, it remains to show that any sphere $\partial \D^3 \ra \C X(x,y)$ has a unique filler. Recall, a $3$-sphere in $\C X(x,y)$ is a collection \[(T_0, \vec{T_0}), \ldots, (T_3, \vec{T_3})\] of totally non-degenerate necklaces in $X$ accompanied by flags $\vec{T_i}$ of vertices \[J_i \subset T_i^1 \subset  V_i\] satisfying the relations \[d_i (T_j, \vec{T_j}) = d_{j-1} (T_i, \vec{T_i}) \hspace{.5cm} \text{for~all}\ i < j.\] 

Corollary \ref{dscor} and the relation between the inner faces implies that the necklaces $T_1$ and $T_2$ are equal; we call this common necklace $S$. By Remark \ref{nicermk}, the relations between the zeroth, first, and second faces define a flag $\vec{S}$ of vertices \[J_1 = J_2\ \subset\ T^1_2 = J_0\ \subset\ T^1_0 = T^1_1\ \subset\ V_0 = V_1 = V_2\] such that $d_i \vec{S} = \vec{T_i}$ for $i < 3$. It is clear that $(S,\vec{S})$ is the only possible filler for this sphere. The relation (\ref{eq1}) with $i=2$ implies that $T_0$ is the necklace of $d_0(S, \vec{S})$; hence, $d_0(S, \vec{S})=(T_0, \vec{T_0})$. The relation (\ref{eq2}) with $j=1$ implies that $T_3$ is the necklace of $d_3(S,\vec{S})$. By Remark \ref{nicermk}, we may use $j=0$ in (\ref{eq2}) to conclude that $\vec{T_n} = \overline{d_3\vec{S}}$, and hence that $d_3 (S, \vec{S}) = (T_3, \vec{T_3})$.
\end{proof}

\begin{ex} Suppose $X$ is the nerve of a category with non-identity morphisms $s \colon x \ra y$ and $r \colon y \ra x$ such that $rs$ is the identity at $x$. Let $T$ be the necklace $\D^3$ and let $U$ be the necklace $\D^2 \vee \D^1$ whose images in $X$ have spines $srs$. Then there is a 2-simplex $\a$ in $\C X(x,y)$ with zeroth face $U$, first face $T$, and second face a degeneracy, but there is no 2-simplex with the positions of $U$ and $T$ reversed. The simplex $\a$ can be glued to degenerate simplices to form horns, depicted below, that have no filler.
\[\xymatrix@=35pt@!0{ \L^3_1 & &  s \ar[dd]^(.7){U} \ar[dr]^{U} & & & \L^3_2 &  & s \ar[dd]^(.7){s_0 s} \ar[dr]^T & \\ &  s \ar[ur]^{s_0s} \ar[dr]_T \ar'[r]^-{U}[rr]  & & srs & & & s \ar'[r]^-{U}[rr] \ar[dr]_{s_0s} \ar[ur]^{s_0s} & & srs \\ &&  srs \ar[ur]_{s_0(srs)} & & &&  & s \ar[ur]_{U} }\]
\end{ex}

\begin{rmk} When $X$ is isomorphic to the nerve of a category such that identity morphisms cannot be factored, or equivalently, so that there do not exist $r$ and $s$ as above, then any restriction of a totally non-degenerate necklace in $X$ is totally non-degenerate. By arguments similar to those given above, all $\L^2_1$, $\L^3_1$, and $\L^3_2$ horns in hom-spaces of $\C X$ can be filled uniquely, and it follows from Lemma \ref{catlem}, that the $\C X(x,y)$ are themselves nerves of categories in this case. The poset categories $[n]$ do satisfy this condition, but most interesting examples do not.
\end{rmk}

We conclude with a theorem that is most likely known somewhere, given the ubiquity of the simplicial resolution construction described in the introduction, but which, with Theorem \ref{dsthm}, admits a particularly simple proof.

\begin{thm} For any category $\A$, the simplicial category $\C NA$ is isomorphic to the simplicial category obtained as the standard free simplicial resolution of $\A$.
\end{thm}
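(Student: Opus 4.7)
The plan is to exhibit an isomorphism $\Phi \colon (FU)^{\bullet+1}\A \to \C N\A$ of simplicial categories that is the identity on the common object set $\text{ob}(\A)$. On the hom-space from $x$ to $y$, an arrow of $(FU)^{n+1}\A$ is, by definition, a sequence of composable non-identity morphisms in $\A$ from $x$ to $y$ equipped with $n$ nested layers of parentheses. I would define $\Phi_n$ to send such an arrow to the triple $(T, f, \vec{T})$ in which $T$ has one bead $\Delta^{n_j}$ for each outermost parenthetical group of $n_j$ morphisms; $f \colon T \to N\A$ sends each bead to the non-degenerate simplex of $N\A$ determined by the morphisms in that group; and the flag $J_T = T^0 \subset T^1 \subset \cdots \subset T^{n-1} \subset T^n = V_T$ records the successive paren refinements, with $T^i$ obtained from $T^{i-1}$ by adjoining those vertices of $T$ lying between consecutive level-$(i+1)$ parenthetical groups within a common level-$i$ group. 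The inverse procedure reads the morphisms off along the spine of $T$ under $f$ and reconstructs the $n$-fold parenthesization from the flag. Total non-degeneracy of $(T, f, \vec{T})$ in the sense of Theorem \ref{dsthm} corresponds exactly to the requirement that every morphism in the sequence be a non-identity arrow of $\A$, so $\Phi_n$ is a bijection for every $n$.

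The next step is to verify that $\Phi$ commutes with faces, degeneracies, and composition. For $d_0 = \e (FU)^n$, removing the outermost parentheses produces a sequence whose new outermost groups are the old level-$2$ groups; on the necklace side Theorem \ref{dsrmk} gives $d_0$ as the $T^1$-splitting, whose beads are exactly these groups and whose new joins are $T^1$, with the remaining flag shifted down by one. For an inner face $d_i = (FU)^i \e (FU)^{n-i}$ with $0 < i < n$, which collapses level-$(i+1)$ parens into the ambient level-$i$ group, Theorem \ref{dsrmk} gives the same necklace $T$ with flag $d_i \vec{T}$, i.e., with $T^i$ deleted --- and deleting $T^i$ is the same as the paren collapse. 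For $d_n = (FU)^n \e$, composing the morphisms inside each innermost pair of parens (and dropping composites that turn out to be identities) matches restriction to the subnecklace on $T^{n-1}$, where each innermost sub-bead is replaced by its diagonal edge, followed by the totally non-degenerate quotient of Lemma \ref{tndlem}. Degeneracies are handled symmetrically: the map $s_k$ which doubles the $k$-th level of parens corresponds to repeating $T^{k-1}$ in the flag. Composition in both simplicial categories is concatenation of parenthesized sequences on the one side and of necklaces with the level-wise union of flags on the other, and these agree tautologically; identities on both sides are the empty sequence / constant necklace at a vertex.

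The main obstacle I anticipate is the bookkeeping around the totally non-degenerate quotient on the necklace side and its matching with the convention on the resolution side that a composition producing an identity is dropped from the parenthesized sequence. Because $X = N\A$ is a nerve, a non-degenerate simplex of $N\A$ has no identity edges in its spine, so the need to pass to the non-degenerate quotient arises only from the innermost composition at $d_n$, where it reproduces exactly the convention of discarding identity composites in $(FU)^n \A$. Once this is pinned down, the fact that $\Phi$ is an isomorphism of simplicial categories is a direct consequence.
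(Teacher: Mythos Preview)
Your proposal is correct and follows the same approach as the paper: both arguments use the Dugger--Spivak characterization (Theorem \ref{dsthm}) to set up a bijection between $n$-simplices of the hom-spaces on each side, identifying a parenthesized sequence of non-identity morphisms with a totally non-degenerate necklace plus flag in $N\A$. The paper's proof is considerably terser---it records the bijection and stops---whereas you go on to verify compatibility with faces, degeneracies, and composition, and to pin down the one subtle point (that the totally non-degenerate quotient in $d_n$ matches the convention that identity composites are dropped in $(FU)^n\A$); this extra care is warranted and your treatment of it is correct.
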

\begin{proof}
The objects of both simplicial categories are the objects of $\A$. It remains to show that the hom-spaces coincide. A necklace in the nerve of a category is uniquely determined by its spine and the set of joins; i.e., a necklace is a sequence of composable non-identity morphisms each contained in one set of parentheses, indicating which morphisms are grouped together to form a bead. An $n$-simplex in a hom-object of the standard free simplicial resolution is a sequence of composable non-identity morphisms, each contained within $(n-1)$ sets of parentheses. The morphisms in the sequence describe the spine of a necklace and the locations of each level of parentheses defines the corresponding set in the flag of vertex data; by Theorem \ref{dsthm}, this exactly specifies an $n$-simplex in the corresponding hom-object of $\C NA$.
\end{proof}

\begin{acknowledgments}
The author would like to thank Dominic Verity for enduring several conversations on this topic and being a very generous host. She would also like to thank her advisor, Peter May, who suggested the title for this paper, and the anonymous reviewer, who suggested several ways to improve its exposition. The author is grateful for support from the National Science Foundation, whose Graduate Research Fellowship allowed her to visit Macquarie University, where this work took place.
\end{acknowledgments}

\end{document}